\pdfoutput=1
\documentclass[bj]{imsart}
\usepackage[T1]{fontenc}      % Encoding
\usepackage[utf8]{inputenc}   % Encoding
\DeclareUnicodeCharacter{00A0}{ } % to prevent \u8 errors
\usepackage{microtype}        % Improved typesetting
\usepackage{bbm}              % For getting \mathbbm{1} since \mathbb{1} does not work properly
\usepackage{amsmath}
\usepackage{amsthm}
\usepackage{amssymb}
\usepackage{commath}          % Commands for differentials etc.
\usepackage{mathtools}
\usepackage{thmtools}
\usepackage[numbers]{natbib}
\bibliographystyle{apalike}
\usepackage{verbatim}               % For multiline comments
\usepackage[dvipsnames]{xcolor}     % For writer-specific comments, must be loaded before tikz

\usepackage[colorlinks,citecolor=blue,urlcolor=blue]{hyperref}

\allowdisplaybreaks

%%%% THEOREM ENVIROMENTS AND EQUATION NUMBERING

\numberwithin{equation}{section}
\declaretheorem[Refname={Theorem,Theorems}]{theorem}
\numberwithin{theorem}{section} % This should commented out if there is to be a non-section-based nvumbering
\declaretheorem[style=definition,numberlike=theorem,Refname={Definition,Definitions}]{definition}

\declaretheorem[style=definition,numberlike=theorem,Refname={Remark,Remarks}]{remark}
\declaretheorem[numberlike=theorem,Refname={Lemma,Lemmas}]{lemma}
\declaretheorem[name=Corollary,numberlike=theorem,Refname={Corollary,Corollaries}]{corollary}
\declaretheorem[name=Proposition,numberlike=theorem,Refname={Proposition,Propositions}]{proposition}

\newtheoremstyle{named}{}{}{\itshape}{}{\bfseries}{.}{.5em}{\thmnote{#3}}
\theoremstyle{named}
\newtheorem*{namedtheorem}{Theorem}

%%%% NEWCOMMANDS

% For this project

\DeclareMathOperator{\e}{e} % Exponential constant

% Some paired delimiters

\DeclarePairedDelimiterX\Set[2]{\lbrace}{\rbrace}%
{ #1 \,:\, #2 }                                         % Set of type {A : B}
\DeclarePairedDelimiterX\inprod[2]{\langle}{\rangle}%
{ #1 , #2 }                                             % Inner product
          % Ceiling function
         % Floor function

% Number sets
\newcommand{\R}{\mathbb{R}} % Set of real numbers
 % Set of rational numbers
\newcommand{\N}{\mathbb{N}} % Set of natural numbers
 % Set of integers

% Very small text

% Other
\newcommand{\GP}{\mathcal{GP}}
\DeclareMathOperator{\tr}{tr}

% Author comments

\makeatletter
\def\blfootnote{\gdef\@thefnmark{}\@footnotetext}
\makeatother

%%%%%%%%% DOCUMENT

\begin{document}

\begin{frontmatter}
  \title{Small Sample Spaces for Gaussian Processes}
  \runauthor{Toni Karvonen}
  \runtitle{Small Sample Spaces for Gaussian Processes}

\begin{aug}
  \author[A,B]{\fnms{Toni} \snm{Karvonen}\ead[label=e1,mark]{toni.karvonen@helsinki.fi}}
  \address[A]{The Alan Turing Institute, 96 Euston Road, London NW1 2DB, United Kingdom}
    \address[B]{University of Helsinki, Department of Mathematics and Statistics, Pietari Kalmin katu 5, 00560 Helsinki, Finland \\
  \printead{e1}}
\end{aug}

\begin{abstract}
It is known that the membership in a given reproducing kernel Hilbert space (RKHS) of the samples of a Gaussian process $X$ is controlled by a certain nuclear dominance condition. However, it is less clear how to identify a ``small'' set of functions (not necessarily a vector space) that contains the samples. This article presents a general approach for identifying such sets. We use \emph{scaled RKHSs}, which can be viewed as a generalisation of Hilbert scales, to define the \emph{sample support set} as the largest set which is contained in every element of full measure under the law of $X$ in the $\sigma$-algebra induced by the collection of scaled RKHS. This potentially non-measurable set is then shown to consist of those functions that can be expanded in terms of an orthonormal basis of the RKHS of the covariance kernel of $X$ and have their squared basis coefficients bounded away from zero and infinity, a result suggested by the Karhunen--Lo\`{e}ve theorem.
\end{abstract}

\begin{keyword}
\kwd{Gaussian processes}
\kwd{sample path properties}
\kwd{reproducing kernel Hilbert spaces}
\end{keyword}

\end{frontmatter}

\section{Introduction}

\noindent
Let $K \colon T \times T \to \R$ be a positive-semidefinite kernel on a set $T$ and consider any Gaussian process $(X(t))_{t \in T}$ with mean zero and covariance $K$, which we denote $(X(t))_{t \in T} \sim \GP(0, K)$.
Let $H(K)$ be the reproducing kernel Hilbert space (RKHS) of $K$ equipped with inner product $\inprod{\cdot}{\cdot}_K$ and norm $\norm[0]{\cdot}_K$.
It is a well-known fact, apparently originating with \citet{Parzen1963}, that the samples of $X$ are not contained in $H(K)$ if this space is infinite-dimensional. 
Furthermore, \citet{Driscoll1973}; \citet{Fortet1973}; and \citet{LukicBeder2001} have used the zero-one law of \citet{Kallianpur1970} to show essentially that, given another kernel $R$ and under certain mild assumptions,
\begin{equation*}
  \mathbb{P}\big[ X \in H(R) \big] = 1 \: \text{ if $R \gg K$} \quad \text{ and } \quad \mathbb{P}\big[ X \in H(R) \big] = 0 \: \text{ if $R \not\gg K$},
\end{equation*}
where $R \gg K$ signifies that $R$ \emph{dominates} $K$ (i.e., $H(K) \subset H(R)$) and, moreover, that the dominance is \emph{nuclear} (see Section~\ref{sec:domination} for details). 
This \emph{Driscoll's theorem} is an exhaustive tool for verifying whether or not the samples from a Gaussian process are contained in a given RKHS.
A review of the topic can be found in~\citep[Chapter~4]{Gualtierotti2015}.
Two questions now arise:
\begin{itemize}
\item How to construct a kernel $R$ such that $R \gg K$?
\item Is it possible to exploit the fact that $\mathbb{P}[X \in H(R_1) \setminus H(R_2)] = 1$ for any kernels such that $R_1 \gg K$ and $R_2 \not\gg K$ to identify in some sense the smallest set of functions which contains the samples with probability one?
\end{itemize}
Answers to questions such as these are instructive for theory and design of Gaussian process based learning~\citep{Flaxman2016, VaarZanten2011}, emulation and approximation~\citep{Karvonen-MLE2020, XuStein2017}, and optimisation~\citep{Bull2011} methods.
For simplicity we assume that the domain $T$ is a complete separable metric space, that the kernel $K$ is continuous and its RKHS is separable, and that the samples of $X$ are continuous.
Although occasionally termed ``rather restrictive''~\citep[p.\@~309]{Driscoll1973}, these continuity assumptions are satisfied by the vast majority of domains and Gaussian processes commonly used in statistics and machine learning literature~\citep{RasmussenWilliams2006, Stein1999}, such as stationary processes with Gaussian or Mat\'{e}rn covariance kernels.
Our motivation for imposing these restrictions is that they imply that RKHSs are measurable.

\subsection{Contributions}

First, we present a flexible construction for a kernel $R$ such that $R \gg K$.
For any orthonormal basis $\Phi = (\phi_n)_{n=1}^\infty$ of $H(K)$ the kernel $K$ can be written as $K(t, t') = \sum_{n=1}^\infty \phi_n(t) \phi_n(t')$ for all $t, t' \in T$.
Given a positive sequence $A = (\alpha_n)_{n=1}^\infty$ such that $\sum_{n=1}^\infty \alpha_n \phi_n(t)^2 < \infty$ for all $t \in T$, we define the \emph{scaled kernel}
\begin{equation} \label{eq:scaled-kernel-intro}
  K_{A,\Phi}(t, t') = \sum_{n=1}^\infty \alpha_n \phi_n(t) \phi_n(t').
\end{equation}
This is a significant generalisation of the concept of powers of kernels which has been previously used to construct RKHSs which contain the samples by \citet{Steinwart2019}.
We call the sequence $A$ a $\Phi$-\emph{scaling} of $H(K)$.
If $\alpha_n \to \infty$ as $n \to \infty$, the corresponding \emph{scaled RKHS}\footnote{These spaces are not to be confused with classical Hilbert scales defined via powers of a strictly positive self-adjoint operator; see~\citep[Section~8.4]{EnglHankeNeubauer1996} and~\citep{KreinPetunin1966}.}, $H(K_{A,\Phi})$, is a proper superset of $H(K)$, though not necessarily large enough to contain the samples of $X$.
We show that convergence of the series $\sum_{n=1}^\infty \alpha_n^{-1}$ controls whether or not samples are contained in $H(K_{A,\Phi})$.
If $\sum_{n=1}^\infty \alpha_n^{-1}$ converges slowly, we can therefore interpret $H(K_{A,\Phi})$ as a ``small'' RKHS which contains the samples.

\begin{namedtheorem}[Main Result I (Theorem~\ref{thm:driscoll-scaled})]
  Let $\Phi = (\phi_n)_{n=1}^\infty$ be an orthonormal basis of $H(K)$ and $A = (\alpha_n)_{n=1}^\infty$ a $\Phi$-scaling of $H(K)$. If $K_{A,\Phi}$ is continuous and $d_K(t,t') = \norm[0]{K(\cdot,t) - K(\cdot, t')}_K$ is a metric on $T$, then either
  \begin{equation*}
    \mathbb{P}\big[ X \in H(K_{A,\Phi}) \big] = 0 \:\: \text{ and } \:\: \sum_{n=1}^\infty \frac{1}{\alpha_n} = \infty \quad \text{ or } \quad \mathbb{P}\big[ X \in H(K_{A,\Phi}) \big] = 1 \:\: \text{ and } \:\: \sum_{n=1}^\infty \frac{1}{\alpha_n} < \infty.
  \end{equation*}
\end{namedtheorem}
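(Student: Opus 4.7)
My plan is to reduce the statement to Driscoll's theorem (the characterisation of a.s.\ RKHS-membership via nuclear dominance, as stated in the introduction and presumably established in the earlier sections referenced by the paper) applied to the particular kernel $R = K_{A,\Phi}$. The whole game is then to compute the trace that detects nuclear dominance and identify it with $\sum_{n=1}^\infty \alpha_n^{-1}$.

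First I would set up the RKHS $H(K_{A,\Phi})$. From the Mercer-type expansion \eqref{eq:scaled-kernel-intro} and standard facts about sum-of-rank-one kernels, the family $(\sqrt{\alpha_n}\,\phi_n)_{n=1}^\infty$ is an orthonormal basis of $H(K_{A,\Phi})$, so that $f = \sum_n c_n \phi_n$ lies in $H(K_{A,\Phi})$ with squared norm $\sum_n c_n^2 / \alpha_n$. In particular, $\norm{\phi_n}_{K_{A,\Phi}}^2 = 1/\alpha_n$, and because $(\alpha_n)$ is positive the inclusion $H(K) \hookrightarrow H(K_{A,\Phi})$ holds as an embedding of Hilbert spaces. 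This step is essentially bookkeeping but is the crucial bridge between the analytic object ``scaled RKHS'' and the trace-class condition.

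Next I would invoke Driscoll's theorem. Nuclear dominance $K_{A,\Phi} \gg K$ is equivalent to the inclusion $I \colon H(K) \hookrightarrow H(K_{A,\Phi})$ being trace-class, and its trace can be computed using the orthonormal basis $(\phi_n)$ of the domain:
\begin{equation*}
  \tr(I^*I) \;=\; \sum_{n=1}^\infty \norm{I\phi_n}_{K_{A,\Phi}}^2 \;=\; \sum_{n=1}^\infty \frac{1}{\alpha_n}.
\end{equation*}
Thus $\sum_n \alpha_n^{-1} < \infty$ is precisely the nuclear dominance condition, and the dichotomy in the statement is exactly the dichotomy given by Driscoll's theorem: $\mathbb{P}[X \in H(K_{A,\Phi})] = 1$ in the convergent case and $\mathbb{P}[X \in H(K_{A,\Phi})] = 0$ otherwise.

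What remains, and what I expect to be the real technical obstacle, is verifying the measurability hypotheses needed to apply Driscoll's theorem (equivalently the Kallianpur zero-one law) to the event $\{X \in H(K_{A,\Phi})\}$. This is exactly why the statement carries the assumptions that $K_{A,\Phi}$ is continuous and that $d_K$ is a metric on $T$: together with the standing assumption that $T$ is a complete separable metric space with continuous samples, they ensure that $H(K_{A,\Phi})$ is separable and Borel-measurable in the space of continuous functions on $T$, so that the probability $\mathbb{P}[X \in H(K_{A,\Phi})]$ is well defined and the zero-one law applies. Once this measurability is in place, the nuclear-dominance calculation above concludes the proof.
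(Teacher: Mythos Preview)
Your approach is the same as the paper's---reduce to Driscoll's theorem and identify the nuclear dominance condition with $\sum_n \alpha_n^{-1} < \infty$---but one step is wrong. You assert that positivity of $(\alpha_n)$ alone gives an embedding $H(K) \hookrightarrow H(K_{A,\Phi})$. This is false: by Proposition~\ref{prop:scaled-relations}, $H(K) \subset H(K_{A,\Phi})$ holds if and only if $\inf_{n \geq 1} \alpha_n > 0$, whereas a $\Phi$-scaling is merely required to have positive terms. When $\inf_n \alpha_n = 0$ the inclusion operator $I$ you compute with does not exist as a bounded map, so the trace identity $\tr(I^*I)=\sum_n \alpha_n^{-1}$ is not well-posed. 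The paper repairs this with a case split: if $H(K) \not\subset H(K_{A,\Phi})$ then dominance (hence nuclear dominance) fails trivially, so Driscoll's theorem already gives $\mathbb{P}[X \in H(K_{A,\Phi})] = 0$, and $\sum_n \alpha_n^{-1} = \infty$ follows because convergence of this series would force $\alpha_n \to \infty$ and hence $\inf_n \alpha_n > 0$, a contradiction. The gap is easy to close, but as written your argument does not cover all scalings.

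A smaller point: the hypothesis that $d_K$ is a metric is not there to secure measurability of $H(K_{A,\Phi})$. The version of Driscoll's theorem invoked (Theorem~\ref{thm:driscoll}) requires that $d_R$ be a metric for $R = K_{A,\Phi}$; Lemma~\ref{lemma:scaled-dR-metric} supplies this, since $d_{K_{A,\Phi}}(t,t')^2 = \sum_n \alpha_n[\phi_n(t)-\phi_n(t')]^2$ vanishes exactly when $d_K(t,t')^2 = \sum_n [\phi_n(t)-\phi_n(t')]^2$ does. Measurability comes from the continuity of $K_{A,\Phi}$ together with the standing assumptions, not from $d_K$.
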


In Section~\ref{sec:examples}, we use this result to study sample path properties of Gaussian processes defined by infinitely smooth kernels.
These appear to be the first sufficiently descriptive results of their kind.
An example of an infinitely smooth kernel that we consider is the univariate (i.e., $T \subset \R$) Gaussian kernel \sloppy{${K(t,t') = \exp(-(t-t')^2/(2\ell^2))}$} with length-scale $\ell > 0$ for which we explicitly construct several scaled kernels $R$ whose RKHSs are ``small'' but still contain the samples of $(X(t))_{t \in T} \sim \GP(0, K)$.
In Section~\ref{sec:MLE}, Theorem~\ref{thm:driscoll-scaled} is applied to provide an intuitive explanation for a conjecture by \citet{XuStein2017} on asymptotic behaviour of the maximum likelihood estimate of the scaling parameter of the Gaussian kernel when the data are generated by a monomial function on a uniform grid.

Secondly, we use Theorem~\ref{thm:driscoll-scaled} to construct a ``small'' set which ``almost'' contains the samples. 
This \emph{sample support set} is distinct from the traditional topological support of a Gaussian measure; see the discussion at the end of Section~\ref{sec:related}.
Let $C(T)$ denote the set of continuous function on $T$.

\begin{namedtheorem}[Main Result II (Theorems~\ref{thm:sample-set-all} and~\ref{thm:not-countable})] \label{main-result-ii}
  Let $\Phi = (\phi_n)_{n=1}^\infty$ be an orthonormal basis of $H(K)$ and suppose there is a $\Phi$-scaling $A = (\alpha_n)_{n=1}^\infty$ such that $\sum_{n=1}^\infty \alpha_n^{-1} < \infty$ and $K_{A,\Phi}$ is continuous.
  Let $\mathcal{S}(\mathfrak{R})$ be the $\sigma$-algebra generated by the collection of scaled RKHSs consisting of continuous functions and $S_\mathfrak{R}(K)$ the largest subset of $C(T)$ that is contained in every $H \in \mathcal{S}(\mathfrak{R})$ such that $\mathbb{P}[X \in H] = 1$.
  Suppose that $d_K(t,t') = \norm[0]{K(\cdot,t) - K(\cdot, t')}_K$ is a metric on $T$.
  Then $S_\mathfrak{R}(K)$ consists precisely of the functions $f = \sum_{n=1}^\infty f_n \phi_n$ such that
  \begin{equation} \label{eq:constant-order-intro}
    \liminf_{n \to \infty} f_n^2 > 0 \quad \text{ and } \quad \sup_{n \geq 1} f_n^2 < \infty.
  \end{equation}
  Furthermore, for every $H \in \mathcal{S}(\mathfrak{R})$ such that $\mathbb{P}[X \in H] = 1$ there exists $F \in \mathcal{S}(\mathfrak{R})$ such that $S_\mathfrak{R}(K)$ is a proper subset of $F$ and $F$ is a proper subset of $H$.
\end{namedtheorem}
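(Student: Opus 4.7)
The plan is to prove the characterization of $S_\mathfrak{R}(K)$ in both directions and then the non-minimality statement. Writing $S_0$ for the set of $f \in C(T)$ described by~\eqref{eq:constant-order-intro}, the necessity direction $S_\mathfrak{R}(K) \subseteq S_0$ would be obtained by feeding into the defining property of $S_\mathfrak{R}(K)$ the two families of sets that Theorem~\ref{thm:driscoll-scaled} controls explicitly. For any $\Phi$-scaling $A$ with $\sum_n \alpha_n^{-1} < \infty$ the scaled RKHS $H(K_{A,\Phi})$ has full measure, so any $f \in S_\mathfrak{R}(K)$ must lie in $H(K_{A,\Phi})$, yielding $\sum_n f_n^2/\alpha_n < \infty$; dually, for any $A$ with $\sum_n \alpha_n^{-1} = \infty$ the complement $H(K_{A,\Phi})^c$ has full measure, forcing $\sum_n f_n^2/\alpha_n = \infty$. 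Concentrating the scaling on a subsequence along which $f_n^2 \to \infty$, respectively $f_n^2 \to 0$, turns failure of $\sup_n f_n^2 < \infty$ or $\liminf_n f_n^2 > 0$ into a contradiction; the baseline scaling provided by the hypothesis is used to keep the construction within the class of $\Phi$-scalings and to preserve continuity of $K_{A,\Phi}$.

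For sufficiency $S_0 \subseteq S_\mathfrak{R}(K)$ I would first establish a uniformity lemma: every $H \in \mathcal{S}(\mathfrak{R})$ satisfies either $S_0 \subseteq H$ or $S_0 \cap H = \emptyset$. The collection of such $S_0$-uniform sets is closed under complements and countable unions, and contains each generator $H(K_{A,\Phi}) \in \mathfrak{R}$ (the first alternative holding precisely when $\sum_n \alpha_n^{-1} < \infty$), so by minimality it equals $\mathcal{S}(\mathfrak{R})$. Given uniformity, concluding $S_0 \subseteq H$ for every full-measure $H$ reduces to ruling out a null $H' \in \mathcal{S}(\mathfrak{R})$ containing $S_0$. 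For this I would invoke the standard fact that any $H' \in \mathcal{S}(\mathfrak{R})$ lies in the sub-$\sigma$-algebra generated by countably many scaled RKHSs $\{H(K_{A_i,\Phi})\}_{i=1}^\infty$, and examine the atom of this sub-algebra containing $S_0$, which equals $\bigcap_{i \in I^+} H(K_{A_i,\Phi}) \cap \bigcap_{i \in I^-} H(K_{A_i,\Phi})^c$ with $I^+ = \{i : \sum_n \alpha_{i,n}^{-1} < \infty\}$ and $I^- = \{i : \sum_n \alpha_{i,n}^{-1} = \infty\}$. Each constituent event has probability one by Theorem~\ref{thm:driscoll-scaled}, so the atom has probability one and $H' \supseteq S_0$ must also have probability one, contradicting nullity.

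For the non-minimality statement, given a full-measure $H \in \mathcal{S}(\mathfrak{R})$ I would construct $F = H \cap H(K_{A,\Phi})$ for a suitably chosen $\Phi$-scaling $A$. Writing $X = \sum_n \xi_n \phi_n$ with $\xi_n$ independent standard Gaussians, the second Borel--Cantelli lemma gives $\sup_n \xi_n^2 = \infty$ and $\liminf_n \xi_n^2 = 0$ almost surely; since $H$ has full measure, almost every sample $f^* = X(\omega) \in H$ has unbounded squared coefficients, and the subsequence construction from the necessity step then produces $A$ with $\sum_n \alpha_n^{-1} < \infty$, continuous $K_{A,\Phi}$, and yet $f^* \notin H(K_{A,\Phi})$. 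The resulting $F$ is in $\mathcal{S}(\mathfrak{R})$, has full measure, is a proper subset of $H$ (as $f^* \notin F$), contains $S_\mathfrak{R}(K)$ by the characterization together with $S_0 \subseteq H(K_{A,\Phi})$, and properly contains $S_\mathfrak{R}(K)$ because almost every sample lies in $F$ while failing the lower bound $\liminf_n \xi_n^2 > 0$. The hardest step will be sufficiency in the characterization, specifically the passage from the uniformity lemma to a measure statement; the reduction to a countably generated sub-$\sigma$-algebra is the crucial ingredient making this possible.
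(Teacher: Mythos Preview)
Your proof is correct. For the characterisation, your argument tracks the paper's closely: the necessity direction (concentrating scalings on subsequences along which $f_n^2\to\infty$ or $f_n^2\to 0$, anchored to the baseline scaling to stay inside $\Sigma_\Phi$) is exactly the construction in the paper's Theorem~\ref{thm:sample-set-all}. For sufficiency the paper packages the argument differently---it first proves the formula $S_\mathfrak{R}(K)=\bigcap_{R_1\in\mathfrak{R}_1}H(R_1)\setminus\bigcup_{R_0\in\mathfrak{R}_0}H(R_0)$ (Proposition~\ref{prop:sample-set-difference}), after which $S_0\subseteq S_\mathfrak{R}(K)$ is immediate. The non-trivial half of that proposition (that the displayed intersection lies inside every full-measure $H\in\mathcal{S}(\mathfrak{R})$) is dispatched by the paper in one line (``it is therefore clear''), which amounts to the good-sets argument you spell out. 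Your separation into a uniformity lemma followed by a countable-generator/atom reduction works, though it is slightly longer than needed: one can merge the two steps by checking directly that $\{H\in\mathcal{S}(\mathfrak{R}):(\mu_X(H)=1\text{ and }S_0\subseteq H)\text{ or }(\mu_X(H)=0\text{ and }S_0\cap H=\emptyset)\}$ is a $\sigma$-algebra containing the generators.

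The non-minimality part is where you diverge substantively. The paper (Theorem~\ref{thm:not-countable}) is purely analytic: reduce $H$ to a countable atom $\bigcap_m H(K_{A_m,\Phi})\setminus\bigcup_m H(K_{B_m,\Phi})$, then invoke a du~Bois-Reymond/Hadamard-type lemma (Lemma~\ref{lemma:no-series-boundary}, built on Dini's Lemma~\ref{lemma:dini}) to manufacture a single convergent scaling strictly below all the $A_m$. Your route is probabilistic: pick an actual sample $f^*=X(\omega^*)\in H$ with $\sup_n\xi_n(\omega^*)^2=\infty$ via Borel--Cantelli, then recycle the necessity construction to obtain a full-measure scaled RKHS excluding $f^*$. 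This is shorter and sidesteps the series lemma entirely; the price is that you rely on the Karhunen--Lo\`eve expansion and on the (routine but worth stating) identification of the $\xi_n(\omega)$ with the basis coefficients of $X(\omega)$ in $H(K_{A,\Phi})$, which follows because $\sum_n\xi_n^2/\alpha_n<\infty$ a.s.\ forces the KL series to converge in $H(K_{A,\Phi})$. The paper's approach, by contrast, is deterministic, yields a more explicit $F$, and isolates a fact of independent interest---that there is no countable ``boundary'' between convergent and divergent series.
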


The set $S_{\mathfrak{R}}(K)$ may fail to be measurable.
The latter part of the above result is therefore important in demonstrating that it is possible to construct sets which are arbitrarily close to $S_{\mathfrak{R}}(K)$ and contain the samples using countably many elementary set operations of scaled RKHSs.
At its core this is a manifestation of the classical result that there is no meaningful notion of a boundary between convergent and divergent series~\citep[§~41]{Knopp1951}.
The general form of the Karhunen--Lo\`{e}ve theorem is useful in explaining the characterisation in~\eqref{eq:constant-order-intro}.
If $(\phi_n)_{n=1}^\infty$ is any orthonormal basis of $H(K)$, then the Gaussian process can be written as $X(t) = \sum_{n=1}^\infty \zeta_n \phi_n(t)$ for all $t \in T$, where $\zeta_n$ are independent standard normal random variables.
The series converges in $L^2(\mathbb{P})$, but if almost all samples of $X$ are continuous, convergence is also uniform on $T$ with probability one~\citep[Theorem~3.8]{Adler1990}.
Because $\norm[0]{X(t)}_K^2 = \sum_{n=1}^\infty \zeta_n^2$ and $\mathbb{E}[\zeta_n^2] = 1$ for every $n$, the Karhunen--Lo\`{e}ve expansion suggests, somewhat informally, that the samples are functions $f = \sum_{n=1}^\infty f_n \phi_n$ for which the sequence $(f_n^2)_{n=1}^\infty$ satisfies~\eqref{eq:constant-order-intro}.

\subsection{On Measurability and Continuity} \label{sec:measurability}

Suppose for a moment that $T$ is an arbitrary uncountable set, $K$ a positive-semidefinite kernel on $T$ such that its RKHS $H(K)$ is infinite-dimensional, and $(X(t))_{t \in T} \sim \mathcal{GP}(0, K)$ a generic Gaussian process defined on a generic probability space $(\Omega, \mathcal{A}, \mathbb{P})$.
Let $\R^T$ be the collection of real-valued functions on $T$ and $\tilde{\mathcal{B}}$ the $\sigma$-algebra generated by cylinder sets of the form $\Set{f \in \R^T}{ (f(t_1), \ldots, f(t_n)) \in B^n}$ for any $n \in \N$ and any Borel set $B^n \subset \R^n$.
Let $\Phi_X(\omega) = X(\cdot, \omega)$. Then $\tilde{\mu}_X = \mathbb{P} \circ \Phi_X^{-1}$ is the law of $X$ on the measurable space $(\R^T, \tilde{\mathcal{B}})$.
Consequently,
  $\mathbb{P}[ X \in H ] = \tilde{\mu}_X\big( \Set{ \omega \in \Omega }{ X(\cdot, \omega) \in H } \big)$
for $H \in \tilde{\mathcal{B}}$.
Let $(\R^T, \tilde{\mathcal{B}}_0, \tilde{\mu}_{X,0})$ be the completion of $(\R^T, \tilde{\mathcal{B}}, \tilde{\mu}_X)$ and $R \colon T \times T \to \R$ a positive-semidefinite kernel and $H(R)$ its RKHS.
The following facts are known about the measurability of $H(K)$ and $H(R)$:
\begin{itemize}
\item In general, $H(R) \notin \tilde{\mathcal{B}}$. For example, if $T$ is equipped with a topology and $R$ is continuous on $T \times T$, then $H(R) \subset C(T)$. However, no non-empty subset of $C(T)$ can be an element of $\tilde{\mathcal{B}}$.
\item \citet[p.\@~347]{LePage1973} has proved that $H(K) \in \tilde{\mathcal{B}}_0$ and $\tilde{\mu}_{X,0}(H(K)) = 0$, a claim which originates with \citet{Parzen1959,Parzen1963}.
  A version which requires separability and continuity appears in~\citep{Kallianpur1971}.
\item If the RKHS $H(R)$ is infinite-dimensional and $R \not\gg K$, then $H(R) \in \tilde{\mathcal{B}}_0$ and \sloppy{${\tilde{\mu}_{X,0}(H(R)) = 0}$}. This result is contained in the proof of Theorem~7.3 in~\citep{LukicBeder2001}. See also~\citep[Proposition~4.5.1]{Gualtierotti2015}.
\item \citet[Corollary~2]{LePage1973} has proved a dichotomy result which states that if $G \subset \R^T$ is an additive group  and $G \in \tilde{\mathcal{B}}_0$, then either $\tilde{\mu}_{X,0}(G) = 0$ or $\tilde{\mu}_{X,0}(G) = 1$. Furthermore, $\tilde{\mu}_{X,0}(G) = 1$ implies that $H(K) \subset G$. This is a general version of the zero-one law of \citet[Theorem~2]{Kallianpur1970}.
\end{itemize}
It appears that not much more can be said without imposing additional structure or constructing versions of $X$, as is done in~\citep{LukicBeder2001}.

Suppose that $(T, d_T)$ is a complete separable metric space, that the kernel $K$ is continuous, and that almost all samples of $(X(t))_{t \in T} \sim \mathcal{GP}(0, K)$ are continuous, which is to say that the $(\tilde{\mathcal{B}}_0, \tilde{\mu}_{X,0})$-outer measure of $C(T)$ is one.
Define the probability space $(C(T), \mathcal{B}, \mu_X)$ as
\begin{equation} \label{eq:B-sigma-algebra}
  \mathcal{B} = C(T) \cap \tilde{\mathcal{B}}_0 \quad \text{ and } \quad \mu_X(C(T) \cap H) = \tilde{\mu}_{X,0}(H) \: \text{ for } \: H \in \tilde{\mathcal{B}}_0.
\end{equation}
The rest of this article is concerned with $(C(T), \mathcal{B}, \mu_X)$ and it is to be understood that $\mathbb{P}[ X \in H ]$ stands for $\mu_X(H)$ for any $H \in \mathcal{B}$.
In this setting \citet[p.\@~313]{Driscoll1973} has proved that $H(R) \in \mathcal{B}$ if $R$ is continuous and positive-definite.
By using Theorem~1.1 in~\citep{Fortet1973} (Theorem~4.1 in~\citep{LukicBeder2001}) one can generalise this result for a continuous and positive-semidefinite $R$; see the proof of Theorem~7.3 in~\citep{LukicBeder2001}.

\subsection{Notation and Terminology}

For non-negative real sequences $(a_n)_{n=1}^\infty$ and $(b_n)_{n=1}^\infty$ we write $a_n \preceq b_n$ if there is $C > 0$ such that $a_n \leq C b_n$ for all sufficiently large $n$. If both $a_n \preceq b_n$ and $b_n \preceq a_n$ hold, we write $a_n \asymp b_n$. If $a_n / b_n \to 1$ as $n \to \infty$, we write $a_n \sim b_n$.
For two sets $F$ and $G$ we use $F \subsetneq G$ to indicate that $F$ is a proper subset of $G$.
A kernel $R \colon T \times T \to \R$ is \emph{positive-semidefinite} if
  \begin{equation} \label{eq:psd}
    \sum_{i=1}^N \sum_{j=1}^N a_i a_j R(t_i, t_j) \geq 0
  \end{equation}
  for any $N \geq 1$, $a_1, \ldots, a_N \in \R$ and $t_1, \ldots, t_N \in T$.
  In the remainder of this article positive-semidefinite kernels are simply referred to as \emph{kernels}.
  If the inequality in~\eqref{eq:psd} is strict for any pairwise distinct $t_1, \ldots, t_N$, the kernel is said to be \emph{positive-definite}.

\subsection{Standing Assumptions}

For ease of reference our standing assumptions are collected here.
We assume that (i) $(T, d_T)$ is a \emph{complete separable metric space}; (ii) the covariance kernel $K \colon T \times T \to \R$ is \emph{continuous} and \emph{positive-semidefinite} on $T \times T$; (iii) the RKHS $H(K)$ induced by $K$ is \emph{infinite-dimensional} and \emph{separable}\footnote{Most famously, separable RKHSs are induced by Mercer kernels, which are continuous kernels defined on compact subsets of $\R^d$~\citep[Section~11.3]{Paulsen2016}.}; and (iv) $(X(t))_{t \in T} \sim \GP(0, K)$ is a zero-mean Gaussian process on a probability space $(\Omega, \mathcal{A}, \mathbb{P})$ with \emph{continuous paths}.
The law $\mu_X$ of $X$ is defined on the measurable space $(C(T), \mathcal{B})$ which was constructed in Section~\ref{sec:measurability}.
Some of our results have natural generalisations for general second-order stochastic processes; see~\citep{LukicBeder2001}, in particular Sections~2 and~5, and~\citep{Steinwart2019}.
We do not pursue these generalisations.

\section{Related Work} \label{sec:related}

Reproducing kernel Hilbert spaces which contain the samples of $(X(t))_{t \in T} \sim \mathcal{GP}(0, K)$ have been constructed by means of integrated kernels in~\citep{Lukic2004}, convolution kernels in~\citep{Cialenco2012} and~\citep[Section~3.1]{Flaxman2016}, and, most importantly, \emph{powers of RKHSs}~\citep{SteinwartScovel2012} in~~\citep[Section~4]{Kanagawa2018} and~\citep{Steinwart2019}.
Namely, let $T$ be a compact metric space, $K$ a continuous kernel on $T \times T$, and $\nu$ a finite and strictly positive Borel measure on $T$.
Then the integral operator $T_\nu$, defined for $f \in L^2(\nu)$ via
\begin{equation} \label{eq:mercer-operator}
  (T_\nu f)(t) = \int_T K(t, t') f(t') \dif \nu(t'),
\end{equation}
has decreasing and positive eigenvalues $(\lambda_n)_{n=1}^\infty$, which vanish as $n \to \infty$, and eigenfunctions $(\psi_n)_{n=1}^\infty$ in $H(K)$ such that $(\sqrt{\lambda_n} \psi_n)_{n=1}^\infty$ is an orthonormal basis of $H(K)$. The kernel has the uniformly convergent Mercer expansion $K(t, t') = \sum_{n=1}^\infty \lambda_n \psi_n(t) \psi_n(t')$ for all $t, t' \in T$.
For $\theta > 0$, the $\theta$th power of $K$ is defined as
\begin{equation} \label{eq:power-kernel-introduction}
  K^{(\theta)}(t, t') = \sum_{n=1}^\infty \lambda_n^\theta \psi_n(t) \psi_n(t').
\end{equation}
The series~\eqref{eq:power-kernel-introduction} converges if $\sum_{n=1}^\infty \lambda_n^\theta \psi_n(t)^2 < \infty$ for all $t \in T$.
Furthermore, $H(K^{(\theta_2)}) \subsetneq H(K^{(\theta_1)})$ if $\theta_1 < \theta_2$ and $\mathbb{P}[ X \in H(K^{(\theta)})] = 1$ if and only if $\sum_{n=1}^\infty \lambda_n^{1-\theta} < \infty$~\citep[Theorem~5.2]{Steinwart2019}.
When it comes to sample properties, the power kernel construction has two significant downsides:
(i) The measure $\nu$ is a \emph{nuisance parameter}. If one is only interested in sample path properties of Gaussian processes this measure should not have an intrinsic part to play in the analysis and results.
(ii) The construction is somewhat \emph{inflexible} and \emph{unsuitable for infinitely smooth kernels}. Because $H(K^{(\theta)})$ consists precisely of the functions
$f = \sum_{n=1}^\infty f_n \lambda_n^{1/2} \psi_n$ such that $\sum_{n=1}^\infty f_n^2 \lambda_n^{1-\theta} < \infty$
and $\lambda_n \to 0$ as $n \to \infty$, how much larger $H(K^{(\theta)})$ is than $H(K) = H(K^{(\theta=1)})$ is determined by rate of decay of the eigenvalues.
Power RKHSs are more descriptive and fine-grained when the kernel is finitely smooth and its eigenvalues have polynomial decay $n^{-a}$ for $a > 0$ (e.g., Matérn kernels) than when the kernel is infinitely smooth with at least exponential eigenvalue decay $\e^{-bn}$ for $b > 0$ (e.g., Gaussian): the change the decay condition $\sum_{n=1}^\infty f_n^2 < \infty$ for the coefficients $(f_n)_{n=1}^\infty$ to $\sum_{n=1}^\infty f_n^2 n^{-a(1-\theta)} < \infty$ is arguably less substantial than that from $\sum_{n=1}^\infty f_n^2 < \infty$ to $\sum_{n=1}^\infty f_n^2 \e^{-b(1-\theta)n} < \infty$.
Indeed, as pointed out by \citet[Section~4.4]{Kanagawa2018}, when the kernel is Gaussian \emph{every} $H(K^{(\theta)})$ with $\theta < 1$ contains the samples with probability one, which renders powers of RKHSs of dubious utility in that setting because $H(K^{(\theta=1)}) = H(K)$ does not contain the samples.
The relationship between powers of RKHSs and scaled RKHSs is discussed in more detail at the end of Section~\ref{sec:scaled-rkhs}. In Section~\ref{sec:examples} we demonstrate that scaled RKHSs are more useful in describing sample path properties of Gaussian processes defined by infinitely smooth kernels than powers of RKHSs.

To the best of our knowledge, the question about a ``minimal'' set which contains the samples with probability one has received only cursory discussion in the literature.
Perhaps the most relevant digression on the topic is an observation by \citet[pp.\@~369--370]{Steinwart2019}, given here in a somewhat applied form and without some technicalities, that the samples are contained in the set
\begin{equation} \label{eq:sobolev-sample-set}
  \Bigg( \bigcap_{ r < s } W_2^r(T) \Bigg) \setminus W_2^s(T)
\end{equation}
with probability one if $H(K)$ is norm-equivalent to the fractional Sobolev space $W_2^{s+d/2}(T)$ for $s > 0$ on a suitable domain $T \subset \R^d$.
In the Sobolev case the samples are therefore ``$d/2$ less smooth'' than functions in the RKHS of $K$.
Because $W_2^s(T) = \cup_{r \geq s} W_2^r(T)$, the set in~\eqref{eq:sobolev-sample-set} has the same form as the sample support set in~\eqref{eq:sample-set-difference}.
This observation is, of course, a general version of the familiar result that the sample paths of the Brownian motion, whose covariance kernel $K(t,t') = \min\{t,t'\}$ on $T = [0,1]$ induces the Sobolev space $W_2^1([0,1])$ with zero boundary condition at the origin, have regularity $1/2$ in the sense that they are almost surely $\alpha$-Hölder continuous if and only if $\alpha < 1/2$.
That is, there is $C > 0$ such that, for almost every $\omega \in \Omega$,
  $\abs[0]{X(t,\omega) - X(t',\omega)} \leq C \abs[0]{t - t'}^\alpha$
for all $t,t' \in [0,1]$ and any $\alpha < 1/2$.
However, L\'{e}vy's modulus of continuity theorem~\citep[Section~1.2]{MortersPeres2010} improves this to
  $\abs[0]{X(t,\omega) - X(t',\omega)} \leq C \sqrt{h \log (1/h)}$
when $h = \abs[0]{t-t'}$ is sufficiently small.
Since the Sobolev space $W_2^s(T)$ consists of those functions $f \colon T \to \R$ which admit an $L^2(\R^d)$-extension $f_e \colon \R^d \to \R$ whose Fourier transform satisfies
\begin{equation} \label{eq:sobolev-fourier}
  \int_{\R^d} \big(1 + \norm[0]{\xi}^2 \big)^s \abs[0]{\widehat{f}_e(\xi)}^2 \dif \xi < \infty,
\end{equation}
L\'{e}vy's modulus of continuity theorem suggests replacing the weight in~\eqref{eq:sobolev-fourier} with, for example, \sloppy{${(1+\norm[0]{\xi}^2)^s \log(1+\norm[0]{\xi})}$} so that the resulting function space is a proper superset of $W_2^s(T)$ and a proper subset of $W_2^r(T)$ for every $r < s$ and hence a proper subset of the set in~\eqref{eq:sobolev-sample-set}.
Some results and discussion in~\citep[Section~4.2]{Karvonen-MLE2020} and~\citep{Scheuerer2010} have this flavour.

Finally, we remark that classical results about the topological support of a Gaussian measure are distinct from the results in this article.
Let $C(T)$ be equipped with the standard supremum norm $\norm[0]{\cdot}_\infty$.
The \emph{topological support}, $\mathrm{supp}_{C(T)}(\mu_X)$, of the measure $\mu_X$ is the set
\begin{equation*}
  \mathrm{supp}_{C(T)}(\mu_X) = \Set{ f \in C(T) }{ \mu_X(B(f, r)) > 0 \text{ for all } r > 0},
\end{equation*}
where $B(f,r)$ is the $f$-centered $r$-ball in $(C(T), \norm[0]{\cdot}_\infty)$.
It is a classical result~\citep[Theorem~3]{Kallianpur1971} that
  $\mathrm{supp}_{C(T)}( \mu_X ) = \overline{ H(K) }$,
where $\overline{H(K)}$ is the closure of $H(K)$ in $(C(T), \norm[0]{\cdot}_\infty)$.
In other words, the topological support of $\mu_X$ contains every continuous function $f$ such that for every $\varepsilon > 0$ there exist $g_\varepsilon \in H(K)$ satisfying $\norm[0]{f-g_\varepsilon}_\infty < \varepsilon$.
Now, recall that a kernel $R$ is \emph{universal} if $H(R)$ is dense in $C(T)$~\citep[Section~4.6]{Steinwart2008}.
Most kernels of interest to practitioners are universal, including Gaussians, Matérns, and power series kernels.
But, by definition, the closure of the RKHS of a universal kernel equals $C(T)$.
Therefore
  $\mathrm{supp}_{C(T)}( \mu_X ) = \overline{ H(K) } = C(T)$
if $K$ is a universal kernel.
This result does not provide any information about the samples because we have assumed that the samples are continuous to begin with.
See~\citep[Section~3.6]{Bogachev1998} for further results on general topological supports of Gaussian measures.

\section{Scaled Reproducing Kernel Hilbert Spaces} \label{sec:scaled-rkhs}

For any orthonormal basis $\Phi = (\phi_n)_{n=1}^\infty$ of $H(K)$ the kernel has the pointwise convergent expansion $K(t, t') = \sum_{n=1}^\infty \phi_n(t) \phi_n(t')$ for all $t, t' \in T$.
By the standard characterisation of a separable Hilbert space, the RKHS consists of precisely those functions $f \colon T \to \R$ that admit an expansion $f = \sum_{n=1}^\infty f_n \phi_n$ for coefficients such that $\sum_{n=1}^\infty f_n^2 < \infty$.
The Cauchy--Schwarz inequality ensures that this expansion converges pointwise on $T$.
For given functions $f = \sum_{n=1}^\infty f_n \phi_n$ and $g = \sum_{n=1}^\infty g_n \phi_n$ in the RKHS the inner product is $\inprod{f}{g}_K = \sum_{n=1}^\infty f_n g_n$.

\begin{definition}[Scaled kernel and RKHS]
We say that a positive sequence $A = (\alpha_n)_{n=1}^\infty$ is a $\Phi$-\emph{scaling} of $H(K)$ if
$\sum_{n=1}^\infty \alpha_n \phi_n(t)^2 < \infty$ for every $t \in T$.
The kernel
\begin{equation} \label{eq:scaled-kernel}
  K_{A,\Phi}(t,t') = \sum_{n=1}^\infty \alpha_n \phi_n(t) \phi_n(t')
\end{equation}
is called a \emph{scaled kernel} and its RKHS $H(K_{A,\Phi})$ a \emph{scaled RKHS}.
\end{definition}

See~\citep{RakotomamonjyCanu2005,XuZhang2009,ZhangZhao2013} for prior appearances of scaled kernels under different names and not in the context of Gaussian processes.
Although many of the results in this section have appeared in some form in the literature, all proofs are included here for completeness.

\begin{proposition} \label{thm:scaled-RKHS}
  Let $\Phi = (\phi_n)_{n=1}^\infty$ be an orthonormal basis of $H(K)$ and $A = (\alpha_n)_{n=1}^\infty$ a $\Phi$-scaling of $H(K)$.
  Then (i) the scaled kernel $K_{A,\Phi}$ is positive-semidefinite, (ii) the collection $(\sqrt{\alpha_n} \phi_n)_{n=1}^\infty$ is an orthonormal basis of $H(K_{A,\Phi})$, and (iii) the scaled RKHS is
  \begin{equation} \label{eq:scaled-RKHS}
        H(K_{A,\Phi}) = \Set[\Bigg]{ f = \sum_{n=1}^\infty f_n \phi_n }{ \norm[0]{f}_{K_{A,\Phi}}^2 = \sum_{n=1}^\infty \frac{f_n^2}{\alpha_n} < \infty},
  \end{equation}
  where convergence is pointwise, and for any $f = \sum_{n=1}^\infty f_n \phi_n$ and $g = \sum_{n=1}^\infty g_n \phi_n$ in $H(K_{A,\Phi})$ its inner product is
    \begin{equation} \label{eq:scaled-inner-product}
      \inprod{f}{g}_{K_{A,\Phi}} = \sum_{n=1}^\infty \frac{f_n g_n}{\alpha_n}.
    \end{equation}
\end{proposition}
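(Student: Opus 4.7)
The plan is the standard ``construct-the-candidate-Hilbert-space-and-check-the-reproducing-property'' argument, with all convergence bookkeeping traced back to the scaling hypothesis $\sum_{n=1}^{\infty} \alpha_{n} \phi_{n}(t)^{2} < \infty$.

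For (i), I would show that each partial sum $K_N(t,t') = \sum_{n=1}^N \alpha_n \phi_n(t)\phi_n(t')$ is positive-semidefinite because $\alpha_n > 0$ makes every summand a rescaled rank-one kernel, and then note that pointwise limits of positive-semidefinite kernels are positive-semidefinite, so $K_{A,\Phi}$ inherits the property (convergence of the series at every $(t,t')$ follows from Cauchy--Schwarz and the scaling hypothesis at $t$ and $t'$).

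For (ii) and (iii), I would define the candidate space
\begin{equation*}
  H = \Set[\bigg]{ f = \sum_{n=1}^{\infty} f_n \phi_n }{ \sum_{n=1}^{\infty} \frac{f_n^{2}}{\alpha_n} < \infty }
\end{equation*}
equipped with $\inprod{f}{g}_H = \sum_{n=1}^{\infty} f_n g_n / \alpha_n$. First I would show this series defines a genuine function on $T$: by Cauchy--Schwarz,
\begin{equation*}
  \Bigg( \sum_{n=1}^{N} \abs{f_n \phi_n(t)} \Bigg)^{2} \leq \Bigg( \sum_{n=1}^{N} \frac{f_n^{2}}{\alpha_n} \Bigg) \Bigg( \sum_{n=1}^{N} \alpha_n \phi_n(t)^{2} \Bigg),
\end{equation*}
so the series $\sum f_n \phi_n(t)$ converges absolutely and pointwise. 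Completeness of $H$ follows because the map $f \mapsto (f_n/\sqrt{\alpha_n})_{n=1}^{\infty}$ is an isometric isomorphism onto the standard $\ell^{2}$ sequence space; orthonormality of $(\sqrt{\alpha_n}\phi_n)_{n=1}^{\infty}$ in $H$ is immediate from the formula for $\inprod{\cdot}{\cdot}_H$, and this family is a basis since any $f \in H$ is the $H$-norm limit of its partial sums $\sum_{n=1}^{N} f_n \phi_n = \sum_{n=1}^{N} (f_n/\sqrt{\alpha_n}) (\sqrt{\alpha_n}\phi_n)$.

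Next I would verify the reproducing property. The coefficient sequence of $K_{A,\Phi}(\cdot,t)$ in the basis $\Phi$ is $(\alpha_n \phi_n(t))_{n=1}^{\infty}$, and
\begin{equation*}
  \sum_{n=1}^{\infty} \frac{(\alpha_n \phi_n(t))^{2}}{\alpha_n} = \sum_{n=1}^{\infty} \alpha_n \phi_n(t)^{2} < \infty
\end{equation*}
by the scaling hypothesis, so $K_{A,\Phi}(\cdot,t) \in H$. For any $f = \sum f_n \phi_n \in H$,
\begin{equation*}
  \inprod{f}{K_{A,\Phi}(\cdot,t)}_H = \sum_{n=1}^{\infty} \frac{f_n \cdot \alpha_n \phi_n(t)}{\alpha_n} = \sum_{n=1}^{\infty} f_n \phi_n(t) = f(t),
\end{equation*}
which, together with the uniqueness of the RKHS associated with a given kernel, identifies $H$ with $H(K_{A,\Phi})$ and thus establishes both \eqref{eq:scaled-RKHS} and \eqref{eq:scaled-inner-product}, as well as (ii).

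There is no genuine obstacle; the only thing one must be careful about is coupling every appeal to convergence (pointwise convergence of $f$, membership $K_{A,\Phi}(\cdot,t) \in H$, absolute convergence inside the reproducing computation) to the single quantitative input $\sum \alpha_n \phi_n(t)^{2} < \infty$ via Cauchy--Schwarz, so that no circular reasoning about $H(K_{A,\Phi})$ is used before the space has been identified.
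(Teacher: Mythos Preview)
Your proposal is correct and follows essentially the same route as the paper: Cauchy--Schwarz plus the scaling hypothesis to get absolute convergence, a direct check that $K_{A,\Phi}$ is positive-semidefinite, and identification of the candidate Hilbert space with $H(K_{A,\Phi})$. The only cosmetic difference is that the paper dispatches the last step by citing a standard characterisation (Theorem~9 in \citep{Minh2010}), whereas you verify the reproducing property by hand; your version is slightly more self-contained but otherwise the arguments coincide.
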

\begin{proof}
  By the Cauchy--Schwarz inequality and $\sum_{n=1}^\infty \alpha_n \phi_n(t)^2 < \infty$ for every $t \in T$,
  \begin{equation*}
    \sum_{n=1}^\infty \abs[0]{\alpha_n \phi_n(t) \phi_n(t')} \leq \bigg( \sum_{n=1}^\infty \alpha_n \phi_n(t)^2 \bigg)^{1/2} \bigg( \sum_{n=1}^\infty \alpha_n \phi_n(t')^2 \bigg)^{1/2} < \infty
  \end{equation*}
  for any $t, t' \in T$.
  This proves that the scaled kernel in~\eqref{eq:scaled-kernel} is well-defined via an absolutely convergent series.
  To verify that $K_{A,\Phi}$ is positive-semidefinite, note that, for any $N \geq 1$, $a_1, \ldots, a_N \in \R$, and $t_1, \ldots, t_N \in T$, 
  \begin{equation*}
      \sum_{i=1}^N \sum_{j=1}^N a_i a_j K_{A,\Phi}(t_i, t_j) = \sum_{n=1}^\infty \alpha_n \sum_{i=1}^N \sum_{j=1}^N a_i a_j \phi_n(t_i) \phi_n(t_j) = \sum_{n=1}^\infty \alpha_n \bigg( \sum_{i=1}^N a_i \phi_n(t_i) \bigg)^2
  \end{equation*}
  is non-negative because each $\alpha_n$ is positive.
  Because
  \begin{equation*}
    \sum_{n=1}^\infty \abs[0]{ f_n \sqrt{\alpha_n} \phi_n(t) } \leq \bigg( \sum_{n=1}^\infty f_n^2 \bigg)^{1/2} \bigg( \sum_{n=1}^\infty \alpha_n \phi_n(t)^2 \bigg)^{1/2} < \infty \:\: \text{ for every } \:\: t \in T
  \end{equation*}
  if $\sum_{n=1}^\infty f_n^2 < \infty$, the space defined in~\eqref{eq:scaled-RKHS} and~\eqref{eq:scaled-inner-product} is a Hilbert space of functions with an orthonormal basis $(\sqrt{\alpha_n} \phi_n)_{n=1}^\infty$.
  Since $K_{A,\Phi}(t,t') = \sum_{n=1}^\infty \alpha_n \phi_n(t) \phi_n(t')$, the scaled kernel is the unique reproducing kernel of this space~\citep[e.g.,][Theorem~9]{Minh2010}.
\end{proof}

A scaled RKHS depends on the ordering of the orthonormal basis of $H(K)$ used to construct it.
For example, let $\Phi = (\phi_n)_{n=1}^\infty$ be an orthonormal basis of $H(K)$ and suppose that $\alpha_n = n$ defines a $\Phi$-scaling of $H(K)$.
Define another ordered orthonormal basis $\Psi = (\psi_n)_{n=1}^\infty$ by setting $\psi_{2n+1} = \phi_{2^n}$ for $n \geq 0$ and interleaving the remaining $\phi_n$ to produce $\Psi = (\phi_1, \phi_3, \phi_2, \phi_5, \phi_4, \phi_6, \phi_8, \phi_{7}, \phi_{16}, \ldots )$.
The function $f = \sum_{n=0}^\infty \phi_{2^n} \eqqcolon \sum_{n=1}^\infty f_{\Phi,n} \phi_n$ is in $H(K_{A,\Phi})$ because
\begin{equation*}
  \norm[0]{f}_{K_{A,\Phi}}^2 = \sum_{n=1}^\infty \frac{f_{\Phi,n}^2}{\alpha_n} = \sum_{n=0}^\infty \frac{1}{2^n} < \infty
\end{equation*}
but not in $H(K_{A,\Psi})$ because $f = \sum_{n=0}^\infty \phi_{2^n} = \sum_{n=0}^\infty \psi_{2n+1} \eqqcolon \sum_{n=1}^\infty f_{\Psi,n} \psi_n$ and therefore
\begin{equation*}
  \norm[0]{f}_{K_{A,\Psi}}^2 = \sum_{n=1}^\infty \frac{f_{\Psi,n}^2}{\alpha_n} = \sum_{n=0}^\infty \frac{1}{2n+1} = \infty.
\end{equation*}
In practice, the orthonormal basis usually has a natural ordering. For instance, the decreasing eigenvalues specify an ordering for a basis obtained from Mercer's theorem (see Section~\ref{sec:related}) or the basis may have a polynomial factor, the degree of which specifies an ordering (see the kernels in Sections~\ref{sec:gaussian-kernel} and~\ref{sec:power-series-kernels}).

The following results compare sizes of scaled RKHSs: the faster $\alpha_n$ grows, the larger the RKHS $H(K_{A,\Phi})$ is.
A number of additional properties between scaled RKHSs can be proved in a similar manner but are not needed in the developments of this article.
Some of the below results or their variants can be found in the literature.
In particular, see~\citep[Section~6]{XuZhang2009} and~\citep[Section~4]{ZhangZhao2013} for a version of Proposition~\ref{prop:scaled-proper-subset} and some additional results.

\begin{proposition} \label{prop:scaled-relations} Let $\Phi = (\phi_n)_{n=1}^\infty$ be an orthonormal basis of $H(K)$ and $A = (\alpha_n)_{n=1}^\infty$ and $B = (\beta_n)_{n=1}^\infty$ two $\Phi$-scalings of $H(K)$. Then
  $H(K_{B,\Phi}) \subset H(K_{A,\Phi})$ if and only if $\beta_n \preceq \alpha_n$.
  In particular,
  $H(K) \subset H(K_{A,\Phi})$ if and only if $\inf_{n \geq 1} \alpha_n > 0$.
\end{proposition}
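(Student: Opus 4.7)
The plan is to read both statements off the coefficient characterisation of scaled RKHSs given in Proposition~\ref{thm:scaled-RKHS}, namely $H(K_{A,\Phi}) = \{ \sum_n f_n \phi_n : \sum_n f_n^2/\alpha_n < \infty \}$ and likewise for $B$. With this, the sufficiency of $\beta_n \preceq \alpha_n$ is a routine tail-comparison, and the necessity is obtained by constructing an explicit witness function out of the basis $\Phi$.

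For the \emph{if} direction, assume $\beta_n \leq C \alpha_n$ for all $n \geq N_0$. If $f = \sum_n f_n \phi_n \in H(K_{B,\Phi})$, then
\begin{equation*}
  \sum_{n=1}^\infty \frac{f_n^2}{\alpha_n} \;\leq\; \sum_{n=1}^{N_0-1} \frac{f_n^2}{\alpha_n} + C \sum_{n=N_0}^\infty \frac{f_n^2}{\beta_n} \;<\; \infty,
\end{equation*}
so $f \in H(K_{A,\Phi})$ by the characterisation \eqref{eq:scaled-RKHS}. The finite initial segment is controlled because $f_1,\ldots,f_{N_0-1}$ are finite scalars and $\alpha_1,\ldots,\alpha_{N_0-1}$ are strictly positive.

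For the \emph{only if} direction I argue by contrapositive: suppose $\beta_n \not\preceq \alpha_n$, meaning $\sup_{n \geq N} \beta_n/\alpha_n = \infty$ for every $N$. Pick indices $n_1 < n_2 < \cdots$ with $\beta_{n_k}/\alpha_{n_k} \geq k^2$ and define coefficients
\begin{equation*}
  f_n = \begin{cases} \sqrt{\beta_{n_k}}/k & \text{if } n = n_k, \\ 0 & \text{otherwise.} \end{cases}
\end{equation*}
Then $\sum_n f_n^2 / \beta_n = \sum_k 1/k^2 < \infty$, so the series $f = \sum_n f_n \phi_n$ defines (by Proposition~\ref{thm:scaled-RKHS}) an element of $H(K_{B,\Phi})$. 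On the other hand,
\begin{equation*}
  \sum_{n=1}^\infty \frac{f_n^2}{\alpha_n} \;=\; \sum_{k=1}^\infty \frac{\beta_{n_k}}{k^2 \alpha_{n_k}} \;\geq\; \sum_{k=1}^\infty 1 \;=\; \infty,
\end{equation*}
so $f \notin H(K_{A,\Phi})$ and the inclusion fails.

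For the final assertion, take $B$ to be the constant scaling $\beta_n \equiv 1$, which is trivially a $\Phi$-scaling with $K_{B,\Phi} = K$. The equivalence $1 \preceq \alpha_n \Longleftrightarrow \inf_{n \geq 1} \alpha_n > 0$ holds because positivity of every $\alpha_n$ promotes an eventual lower bound $\alpha_n \geq 1/C$ to a uniform one. No single step is a serious obstacle; the only point requiring any care is ensuring that the witness $f$ constructed in the contrapositive genuinely lies in $H(K_{B,\Phi})$, which is immediate from the coefficient characterisation and the automatic pointwise convergence guaranteed by the $\Phi$-scaling condition.
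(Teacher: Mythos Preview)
Your proof is correct and follows essentially the same approach as the paper. The paper argues identically via the coefficient characterisation of Proposition~\ref{thm:scaled-RKHS}, bounds the $A$-norm by $\sup_{n\ge 1}\beta_n/\alpha_n$ times the $B$-norm for the forward direction, and for the converse builds the same type of witness along a subsequence (using $\beta_{n_m}/\alpha_{n_m}\ge 2^m$ and coefficients $2^{-m/2}\sqrt{\beta_{n_m}}$ in place of your $k^2$ and $\sqrt{\beta_{n_k}}/k$); the special case is likewise obtained by taking $\beta_n\equiv 1$.
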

\begin{proof}
  If $\beta_n \preceq \alpha_n$, then for any $f = \sum_{n=1}^\infty f_n \phi_n \in H(K_{B,\Phi})$ we have
  \begin{equation} \label{eq:continuous-embedding}
    \norm[0]{f}_{K_{A,\Phi}}^2 = \sum_{n=1}^\infty \frac{f_n^2}{\alpha_n} = \sum_{n=1}^\infty \frac{\beta_n}{\alpha_n} \, \frac{f_n^2}{\beta_n} \leq \norm[0]{f}_{K_{B,\Phi}}^2 \sup_{n \geq 1} \frac{\beta_n}{\alpha_n} < \infty.
  \end{equation}
  Consequently, $H(K_{B,\Phi}) \subset H(K_{A,\Phi})$.
  Suppose then that $H(K_{B,\Phi}) \subset H(K_{A,\Phi})$ and assume to the contrary that $\sup_{n \geq 1} \alpha_n^{-1} \beta_n = \infty$ so that there is a subsequence $(n_m)_{m=1}^\infty$ such that $\alpha_{n_m}^{-1} \beta_{n_m} \geq 2^m$.
  Then $f = \sum_{m=1}^\infty 2^{-m/2} \sqrt{\beta_{n_m}} \phi_{n_m} \in H(K_{B,\Phi}) \setminus H(K_{A,\Phi})$ since
  \begin{equation*}
    \norm[0]{f}_{K_{B,\Phi}}^2 = \sum_{m=1}^\infty \bigg( \frac{ \sqrt{\beta_{n_m}} }{ 2^{m/2} } \bigg)^2 \frac{1}{\beta_{n_m}} = \sum_{m=1}^\infty 2^{-m} = 1 \: \text{ but } \: \norm[0]{f}_{K_{A,\Phi}}^2 = \sum_{m=1}^\infty 2^{-m} \frac{\beta_{n_m}}{\alpha_{n_m}} \geq \sum_{m=1}^\infty 1 = \infty,
  \end{equation*}
  which contradicts the assumption that $H(K_{B,\Phi}) \subset H(K_{A,\Phi})$.
  Thus $\sup_{n \geq 1} \alpha_n^{-1} \beta_n < \infty$.
  The second statement follows by setting $\beta_n = 1$ for every $n \in \N$ and noting that then $H(K_{B,\Phi}) = H(K)$.
\end{proof}

Two normed spaces $F$ and $G$ are said to be \emph{norm-equivalent} if they are equal as sets and if there exist positive constants $C_1$ and $C_2$ such that
$C_1 \norm[0]{f}_F \leq \norm[0]{f}_G \leq C_2 \norm[0]{f}_F$ for all $f \in F$.
From~\eqref{eq:continuous-embedding} it follows that $H(K_{A,\Phi})$ and $H(K_{B,\Phi})$ are norm-equivalent if and only if $\alpha_n \asymp \beta_n$.

\begin{corollary} \label{cor:norm-equivalence} Let $\Phi = (\phi_n)_{n=1}^\infty$ be an orthonormal basis of $H(K)$ and $A = (\alpha_n)_{n=1}^\infty$ and $B = (\beta_n)_{n=1}^\infty$ two $\Phi$-scalings of $H(K)$. Then $H(K_{A,\Phi})$ and $H(K_{B,\Phi})$ are norm-equivalent if and only if $\alpha_n \asymp \beta_n$.
\end{corollary}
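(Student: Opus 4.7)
My plan is to deduce the corollary as a direct consequence of Proposition~\ref{prop:scaled-relations} together with the quantitative embedding bound that already appears in its proof at~\eqref{eq:continuous-embedding}. The two directions of the equivalence correspond cleanly to the set-theoretic comparisons proved there, supplemented by the constants that are extractable from the displayed norm estimate.

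For the forward implication, suppose the two scaled RKHSs are norm-equivalent. By definition they then coincide as sets, so both inclusions $H(K_{B,\Phi}) \subset H(K_{A,\Phi})$ and $H(K_{A,\Phi}) \subset H(K_{B,\Phi})$ hold. Proposition~\ref{prop:scaled-relations} applied to each in turn gives $\beta_n \preceq \alpha_n$ and $\alpha_n \preceq \beta_n$, i.e.\ $\alpha_n \asymp \beta_n$. For the reverse direction, assume $\alpha_n \asymp \beta_n$. The proposition again yields set equality $H(K_{A,\Phi}) = H(K_{B,\Phi})$, and repeating the estimate from~\eqref{eq:continuous-embedding} once in each role gives
\begin{equation*}
    \norm[0]{f}_{K_{A,\Phi}}^2 \leq \bigg( \sup_{n \geq 1} \frac{\beta_n}{\alpha_n} \bigg) \norm[0]{f}_{K_{B,\Phi}}^2 \quad \text{ and } \quad \norm[0]{f}_{K_{B,\Phi}}^2 \leq \bigg( \sup_{n \geq 1} \frac{\alpha_n}{\beta_n} \bigg) \norm[0]{f}_{K_{A,\Phi}}^2,
\end{equation*}
with both suprema finite thanks to $\alpha_n \asymp \beta_n$. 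Taking square roots produces the two positive constants required by the definition of norm-equivalence.

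The only minor point to watch is that $\preceq$ is a tail condition, so an inequality like $\beta_n \leq C\alpha_n$ may fail for finitely many small indices; this is absorbed by replacing $C$ with the maximum of $C$ and the finitely many exceptional ratios $\beta_n/\alpha_n$, which keeps the supremum finite. Beyond this piece of bookkeeping I foresee no real obstacle: the corollary is essentially Proposition~\ref{prop:scaled-relations} read simultaneously in both directions, with the constants carried along.
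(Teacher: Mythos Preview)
Your proposal is correct and follows essentially the same approach as the paper, which simply remarks that the corollary follows from the embedding estimate~\eqref{eq:continuous-embedding} together with Proposition~\ref{prop:scaled-relations}. Your write-up is in fact more detailed than the paper's one-line justification, and your note about absorbing the finitely many exceptional ratios is a valid piece of bookkeeping that the paper leaves implicit.
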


\begin{proposition} \label{prop:scaled-proper-subset}
  Let $\Phi = (\phi_n)_{n=1}^\infty$ be an orthonormal basis of $H(K)$ and $A = (\alpha_n)_{n=1}^\infty$ and $B = (\beta_n)_{n=1}^\infty$ two $\Phi$-scalings of $H(K)$.
  Then $H(K_{B,\Phi}) \subsetneq H(K_{A,\Phi})$ if and only if $\sup_{n \geq 1} \alpha_n \beta_n^{-1} = \infty$ and $\beta_n \preceq \alpha_n$.
\end{proposition}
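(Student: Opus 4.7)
The claim is essentially a two-sided application of Proposition~\ref{prop:scaled-relations}, so my plan is to reduce the statement to that earlier result rather than redo the construction of a separating element from scratch. The forward direction of Proposition~\ref{prop:scaled-relations} already tells me that $H(K_{B,\Phi}) \subset H(K_{A,\Phi})$ holds if and only if $\beta_n \preceq \alpha_n$, so one of the two required conditions is immediate in both directions; what remains is to characterise when this containment is \emph{strict}.

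For the forward implication, I assume $H(K_{B,\Phi}) \subsetneq H(K_{A,\Phi})$. Then in particular $H(K_{B,\Phi}) \subset H(K_{A,\Phi})$, which by Proposition~\ref{prop:scaled-relations} yields $\beta_n \preceq \alpha_n$. The reverse inclusion fails, so by Proposition~\ref{prop:scaled-relations} applied with the roles of $A$ and $B$ swapped, the relation $\alpha_n \preceq \beta_n$ must fail, which is precisely the statement $\sup_{n \geq 1} \alpha_n \beta_n^{-1} = \infty$.

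For the reverse implication, I assume $\beta_n \preceq \alpha_n$ and $\sup_{n \geq 1} \alpha_n \beta_n^{-1} = \infty$. Proposition~\ref{prop:scaled-relations} gives $H(K_{B,\Phi}) \subset H(K_{A,\Phi})$. If equality held, then Proposition~\ref{prop:scaled-relations} applied in the other direction would force $\alpha_n \preceq \beta_n$, contradicting $\sup_{n \geq 1} \alpha_n \beta_n^{-1} = \infty$. Hence the inclusion is proper.

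The only substantive obstacle is that Proposition~\ref{prop:scaled-relations} is formulated as a one-sided set inclusion rather than an equality, so I need to be careful that ``strict inclusion'' is the same as ``inclusion plus failure of the reverse inclusion'' — but this is immediate from basic set theory, so no additional construction (in particular, no explicit separating function analogous to $f = \sum_{m=1}^\infty 2^{-m/2} \sqrt{\beta_{n_m}}\phi_{n_m}$ from the proof of Proposition~\ref{prop:scaled-relations}) is required here. The proof therefore collapses to two short invocations of the previous proposition.
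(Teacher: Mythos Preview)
Your proof is correct and follows essentially the same approach as the paper: both directions are obtained by applying Proposition~\ref{prop:scaled-relations} twice (once for each inclusion direction) and using that $\sup_{n \geq 1} \alpha_n \beta_n^{-1} = \infty$ is precisely the negation of $\alpha_n \preceq \beta_n$. The only difference is cosmetic---you and the paper present the two implications in opposite orders.
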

\begin{proof}
  Assume first that $\sup_{n \geq 1} \alpha_n \beta_n^{-1} = \infty$ and $\beta_n \preceq \alpha_n$.
  Since $\beta_n \preceq \alpha_n$, Proposition~\ref{prop:scaled-relations} yields $H(K_{B,\Phi}) \subset H(K_{A,\Phi})$.
  Thus $H(K_{B,\Phi})$ is a proper subset of $H(K_{A,\Phi})$ if $H(K_{A,\Phi})$ is not a subset of $H(K_{B,\Phi})$.
  But, again by Proposition~\ref{prop:scaled-relations}, $H(K_{A,\Phi}) \subset H(K_{B,\Phi})$ if and only if $\alpha_n \preceq \beta_n$, which contradicts the assumption that $\sup_{ n \geq 1} \alpha_n \beta_n^{-1} = \infty$.
  Hence $H(K_{B,\Phi})$ is a proper subset of $H(K_{A,\Phi})$.

  Assume then that $H(K_{B,\Phi}) \subsetneq H(K_{A,\Phi})$.
  Then $\beta_n \preceq \alpha_n$ by Proposition~\ref{prop:scaled-relations}.
  If $\sup_{n \geq 1} \alpha_n \beta_n^{-1} = \infty$ did not hold, there would exist $C > 0$ such that $\alpha_n \leq C \beta_n$ for all $n \in \N$, which is to say $\alpha_n \preceq \beta_n$.
  But by Proposition~\ref{prop:scaled-relations} this would imply that $H(K_{A,\Phi}) \subset H(K_{B,\Phi})$, which would contradict the assumption that $H(K_{B,\Phi})$ is a proper subset of $H(K_{A,\Phi})$.
  This completes the proof.
\end{proof}

\begin{remark} Let $H(R)$ be another separable RKHS of functions on $T$.
  The RKHSs $H(K)$ and $H(R)$ are \emph{simultaneously diagonalisable} if there exists an orthonormal basis $(\phi_n)_{n=1}^\infty$ of $H(K)$ which is an orthogonal basis of $H(R)$.
  That is, $(\norm[0]{\phi_n}_R^{-1} \phi_n)_{n=1}^\infty$ is an orthonormal basis of $H(R)$ and consequently $H(R) = H(K_{A,\Phi})$ for the scaling with $\alpha_n = \norm[0]{\phi_n}_R^{-2}$.
\end{remark}

We conclude this section by demonstrating that scaled RKHSs generalise powers of RKHSs.
We say that a $\Phi$-scaling $A_\rho = (\alpha_n)_{n=1}^\infty$ of $H(K)$ is $\rho$-\emph{hyperharmonic} if $\alpha_n = n^{\rho}$ for some $\rho \geq 0$. The RKHS $H(K_{A_\rho,\Phi})$ is a $\rho$-\emph{hyperharmonic scaled RKHS}.
Recall from Section~\ref{sec:related} that if $T$ is a compact metric space, $K$ is continuous on $T \times T$, and~$\nu$ is a finite and strictly positive Borel measure on $T$, then the integral operator in~\eqref{eq:mercer-operator} has eigenfunctions $(\psi_n)_{n=1}^\infty$ and decreasing positive eigenvalues $(\lambda_n)_{n=1}^\infty$ and $\Psi = (\sqrt{\lambda_n} \psi_n)_{n=1}^\infty$ is an orthonormal basis of $H(K)$.
For $\theta > 0$ the kernel \sloppy{${K^{(\theta)}(t,t') = \sum_{n=1}^\infty \lambda_n^\theta \psi_n(t) \psi_n(t')}$} is the $\theta$th power of $K$ and its RKHS $H(K^{(\theta)})$ the $\theta$th power of $H(K)$.
These objects are well-defined if $\sum_{n=1}^\infty \lambda_n^\theta \psi_n(t) < \infty$ for all $t \in T$.
We immediately recognise that $K^{(\theta)}$ equals the scaled kernel $K_{A,\Psi}$ for the scaling $A = ( \lambda_n^{\theta-1})_{n=1}^\infty$ because
\begin{equation*}
  K_{A,\Psi}(t,t') = \sum_{n=1}^\infty \lambda_n^{\theta-1} \lambda_n \psi_n(t) \psi_n(t') = \sum_{n=1}^\infty \lambda_n^{\theta} \psi_n(t) \psi_n(t') = K^{(\theta)}(t, t').
\end{equation*}
Polynomially decaying eigenvalues ($\lambda_n \asymp n^{-p}$ for some $p > 0$) are an important special case.
For example, this holds with $p=-2s/d$ if $T \subset \R^d$ and $H(K)$ is norm-equivalent to the Sobolev space $W_2^s(T)$ for $s > d/2$~\citep[p.\@~370]{Steinwart2019}.
If $\lambda_n \asymp n^{-p}$ and $\rho = p(1-\theta)$, the $\rho$-hyperharmonic scaled RKHS is norm-equivalent to the power RKHS $H(K^{(\theta)})$ by Corollary~\ref{cor:norm-equivalence} since $n^\rho \lambda_n \asymp n^{-\theta p} \asymp \lambda_n^\theta$.

\section{Sample Path Properties} \label{sec:sample-paths}

This section contains the main results of the article.
First, we consider a specialisation to scaled RKHSs of a theorem originally proved by Driscoll~\citep{Driscoll1973} and later generalised by Luk\'ic and Beder~\citep{LukicBeder2001}.
Then we define general sample support sets and characterise them for $\sigma$-algebras generated by scalings of $H(K)$.

\subsection{Domination and Generalised Driscoll's Theorem for Scaled RKHSs} \label{sec:domination}

A kernel $R$ on $T$ \emph{dominates} $K$ if $H(K) \subset H(R)$.
In this case there exists~\citep[Theorem~1.1]{LukicBeder2001} a unique linear operator $L \colon H(R) \to H(K)$, called the \emph{dominance operator}, whose range is contained in $H(K)$ and which satisfies
$\inprod{f}{g}_R = \inprod{Lf}{g}_K$ for all $f \in H(R)$ and $g \in H(K)$.
The dominance is said to be \emph{nuclear}, denoted $R \gg K$, if $H(R)$ is separable and the operator $L$ is nuclear, which is to say that
\begin{equation} \label{eq:nuclear-dominance}
  \tr(L) = \sum_{n=1}^\infty \inprod{L \psi_n}{\psi_n}_R < \infty
\end{equation}
for any orthonormal basis $(\psi_n)_{n=1}^\infty$ of $H(R)$.\footnote{A change of basis shows that $\tr(L)$ does not depend on the orthonormal basis.}

Define the pseudometric
  $d_R(t, t') = \norm[0]{R(\cdot, t) - R(\cdot, t')}_R = \sqrt{ R(t,t) - 2R(t,t') + R(t',t') }$
on $T$.
If $R$ is positive-definite, $d_R$ is a metric.
However, positive-definiteness is not necessary for $d_R$ to be a metric.
For example, the Brownian motion kernel $R(t,t') = \min\{t,t'\}$ on $T = [0,1]$ is only positive-semidefinite because $R(t,0) = 0$ for every $t \in T$ but nevertheless yields a metric because $d_R(t,t') = \sqrt{ t - 2\min\{t,t'\} + t' }$ vanishes if and only if $t = t'$.
See~\citep[Section~4]{LukicBeder2001} for more properties of $d_R$.
Injectivity of the mapping $t \mapsto R(\cdot, t)$ is equivalent to $d_R$ being a metric.

By the following theorem, a special case of the zero-one law of Kallianpur~\citep{Kallianpur1970,LePage1973} and a generalisation by \citet[Theorem~7.5]{LukicBeder2001} of an earlier result by \citet[Theorem~3]{Driscoll1973}, the nuclear dominance condition determines whether or not the samples of a Gaussian process \sloppy{${(X(t))_{t \in T} \sim \GP(0, K)}$} lie in $H(R)$.
In particular, the probability of them being in $H(R)$ is always either one or zero.

\begin{theorem}[Generalised Driscoll's Theorem] \label{thm:driscoll} Let $R$ be a continuous kernel on $T \times T$ with separable RKHS and $(X(t))_{t \in T} \sim \GP(0, K)$.
  If $d_R$ is a metric, then either
  \begin{equation*}
    \mathbb{P}\big[ X \in H(R) \big] = 0 \:\: \text{ and } \:\: R \not\gg K \quad \text{ or } \quad \mathbb{P}\big[ X \in H(R) \big] = 1 \:\: \text{ and } \:\: R \gg K.
  \end{equation*}
\end{theorem}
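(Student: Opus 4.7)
The plan is to obtain this theorem by packaging three existing results under the standing assumptions rather than re-deriving it from scratch. First I would verify that $H(R)$ is a valid event in the probability space $(C(T),\mathcal{B},\mu_X)$ constructed in Section~\ref{sec:measurability}; second, I would apply a zero-one dichotomy to $H(R)$ viewed as an additive subgroup of $\R^T$; and third, I would tie the two possible values of $\mu_X(H(R))$ to nuclear dominance by invoking the Lukić--Beder generalisation of Driscoll's theorem.

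For the measurability step, continuity of $R$ together with separability of $H(R)$ force $H(R) \subset C(T)$, and by the argument of Fortet and Lukić--Beder (Theorem~4.1 of~\citep{LukicBeder2001}, which generalises Driscoll's original measurability proof from positive-definite to positive-semidefinite kernels), the space $H(R)$ lies in the completed cylinder $\sigma$-algebra $\tilde{\mathcal{B}}_0$. Intersecting with $C(T)$ places $H(R)$ in $\mathcal{B}$ via~\eqref{eq:B-sigma-algebra}, so $\mu_X(H(R))$ is well defined. For the zero-one step, $H(R)$ is a vector subspace of $\R^T$ and therefore an additive subgroup, so Le Page's dichotomy~\citep[Corollary~2]{LePage1973} immediately yields $\mu_X(H(R)) \in \{0,1\}$, with the value $1$ forcing $H(K) \subset H(R)$, i.e., $R$ at least dominates $K$.

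The substantive step is the third: I would directly invoke Theorem~7.5 of~\citep{LukicBeder2001}, which under our hypotheses---continuity of $K$ and $R$, separability of the two RKHSs, continuous paths of $X$, and $d_R$ being a metric (equivalently, injectivity of $t \mapsto R(\cdot,t)$)---asserts the equivalence $\mu_X(H(R))=1 \iff R \gg K$. Combining this equivalence with the preceding dichotomy produces exactly the two alternatives in the statement, since $R \not\gg K$ must then be paired with probability zero.

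The main obstacle, already handled in~\citep{LukicBeder2001} and imported rather than re-proved, is the non-trivial direction $R \not\gg K \Rightarrow \mu_X(H(R)) = 0$. The easy direction, going back to~\citep{Driscoll1973}, is a moment computation: if the dominance operator $L \colon H(R) \to H(K)$ is nuclear then $\sum_{n=1}^\infty \inprod{L\psi_n}{\psi_n}_R = \tr(L) < \infty$ in any orthonormal basis $(\psi_n)_{n=1}^\infty$ of $H(R)$, which pushes the Karhunen--Loève expansion of $X$ into $H(R)$ almost surely. The converse requires, when $\tr(L) = \infty$, constructing a linear functional of $X$ that diverges almost surely by summing squared basis coefficients along that same expansion. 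Since this argument is carried out in the cited work, the present proof reduces to the assembly task of checking that our standing assumptions satisfy its hypotheses.
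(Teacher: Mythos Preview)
Your approach and the paper's are essentially the same: both reduce the theorem to Theorem~7.5 of \citet{LukicBeder2001}. However, your steps~1 and~2 are superfluous, since measurability of $H(R)$ and the zero-one dichotomy are already part of the machinery underlying that theorem; once you invoke it, you do not need to re-establish them separately.

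More importantly, you miss the one hypothesis check that actually requires a sentence of argument. Theorem~7.5 in \citep{LukicBeder2001} does \emph{not} assume $d_T$-continuity of $R$; instead it assumes that the sample paths of $X$ are $d_R$-continuous. Your list of hypotheses (``continuous paths of $X$'') refers to $d_T$-continuity, which is the standing assumption of the paper, not the hypothesis of the cited theorem. The paper's proof consists precisely of bridging this gap: since $d_R(t,t')^2 = R(t,t) - 2R(t,t') + R(t',t')$ and $R$ is $d_T$-continuous, $d_T$-convergence implies $d_R$-convergence, so $d_T$-continuous sample paths are automatically $d_R$-continuous. You should add this observation and drop the redundant preliminary steps.
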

\begin{proof}
  Theorem~7.5 in~\citep{LukicBeder2001} is otherwise identical except that $R$ is not assumed $d_T$-continuous and the samples of $X$ are assumed $d_R$-continuous.
  However, when $R$ is $d_T$-continuous, $d_T$-continuity of the samples, which one of our standing assumptions, implies their $d_R$-continuity.
\end{proof}

Summability of the reciprocal scaling coefficients controls whether or not a scaled RKHS contains the sample paths.

\begin{lemma} \label{lemma:scaled-dR-metric}
  Let $\Phi = (\phi_n)_{n=1}^\infty$ be an orthonormal basis of $H(K)$ and $A = (\alpha_n)_{n=1}^\infty$ a $\Phi$-scaling of $H(K)$. Let $R = K_{A,\Phi}$. If $d_K$ is a metric, then so is $d_R$.
\end{lemma}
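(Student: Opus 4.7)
The plan is to expand $d_R(t,t')^2$ using the defining series of the scaled kernel and observe that, because every $\alpha_n$ is strictly positive, vanishing of $d_R(t,t')$ forces every basis function to agree at $t$ and $t'$; this in turn forces $d_K(t,t')$ to vanish, and then we invoke the hypothesis that $d_K$ is a metric.

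Concretely, starting from
\begin{equation*}
  d_R(t,t')^2 = R(t,t) - 2R(t,t') + R(t',t'),
\end{equation*}
I would substitute the series~\eqref{eq:scaled-kernel} for $R = K_{A,\Phi}$. The two diagonal series converge by the definition of a $\Phi$-scaling, and the cross series converges absolutely by the Cauchy--Schwarz bound already exploited in the proof of Proposition~\ref{thm:scaled-RKHS}. Hence the three series may be combined term by term to give
\begin{equation*}
  d_R(t,t')^2 = \sum_{n=1}^\infty \alpha_n \bigl( \phi_n(t) - \phi_n(t') \bigr)^2.
\end{equation*}

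If $d_R(t,t') = 0$, then since each $\alpha_n > 0$ and every summand is non-negative, we must have $\phi_n(t) = \phi_n(t')$ for every $n \in \N$. Applying the same computation to $K$ itself (which corresponds to the trivial scaling $\alpha_n \equiv 1$) gives
\begin{equation*}
  d_K(t,t')^2 = \sum_{n=1}^\infty \bigl( \phi_n(t) - \phi_n(t') \bigr)^2 = 0,
\end{equation*}
and since $d_K$ is assumed to be a metric this forces $t = t'$. The remaining metric axioms for $d_R$ (non-negativity, symmetry, and the triangle inequality) are automatic from the representation $d_R(t,t') = \norm{R(\cdot,t) - R(\cdot,t')}_R$ in the Hilbert space $H(R)$, so only the separation property needs the argument above.

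There is no real obstacle here: the only mild subtlety is verifying that the three series defining $R(t,t)$, $R(t,t')$, and $R(t',t')$ can legitimately be combined into the single non-negative series above, but this is immediate from absolute convergence. The rest is a direct consequence of the positivity of the scaling together with the hypothesis on $d_K$.
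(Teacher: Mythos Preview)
Your proposal is correct and follows essentially the same argument as the paper: both expand $d_R(t,t')^2$ as $\sum_{n=1}^\infty \alpha_n(\phi_n(t)-\phi_n(t'))^2$, use positivity of the $\alpha_n$ to deduce that $d_R(t,t')=0$ forces $\phi_n(t)=\phi_n(t')$ for all $n$, and then invoke the corresponding expansion of $d_K$ together with the hypothesis that $d_K$ is a metric. Your version is slightly more explicit about the absolute convergence needed to combine the series and about the other pseudometric axioms, but the substance is identical.
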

\begin{proof}
  Because $d_K$ is a metric,
    $d_K(t,t')^2 = K(t,t) - 2K(t,t') + K(t', t') = \sum_{n=1}^\infty [ \phi_n(t) - \phi_n(t')]^2$
  vanishes if and only if $t = t'$.
  Since $d_R(t, t')^2 = \sum_{n=1}^\infty \alpha_n [ \phi_n(t) - \phi_n(t')]^2$ and $\alpha_n$ are positive, we conclude that $d_R(t, t') = 0$ if and only if $t = t'$.
\end{proof}

\begin{theorem} \label{thm:driscoll-scaled}
  Let $\Phi = (\phi_n)_{n=1}^\infty$ be an orthonormal basis of $H(K)$, $A = (\alpha_n)_{n=1}^\infty$ a $\Phi$-scaling of $H(K)$, and $(X(t))_{t \in T} \sim \mathcal{GP}(0, K)$. If $K_{A,\Phi}$ is continuous and $d_K$ is a metric, then either 
  \begin{equation*}
    \mathbb{P}\big[ X \in H(K_{A,\Phi}) \big] = 0 \:\: \text{ and } \:\: \sum_{n=1}^\infty \frac{1}{\alpha_n} = \infty \quad \text{ or } \quad \mathbb{P}\big[ X \in H(K_{A,\Phi}) \big] = 1 \:\: \text{ and } \:\: \sum_{n=1}^\infty \frac{1}{\alpha_n} < \infty.
  \end{equation*}
\end{theorem}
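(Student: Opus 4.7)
The plan is to apply the Generalised Driscoll's Theorem (Theorem~\ref{thm:driscoll}) with $R = K_{A,\Phi}$. First I would check that its three hypotheses are met: continuity of $K_{A,\Phi}$ is given, separability of $H(K_{A,\Phi})$ is immediate from Proposition~\ref{thm:scaled-RKHS} since $(\sqrt{\alpha_n}\phi_n)_{n=1}^\infty$ is a countable orthonormal basis, and $d_{K_{A,\Phi}}$ being a metric is precisely Lemma~\ref{lemma:scaled-dR-metric}. Driscoll's theorem then delivers the zero--one dichotomy, reducing the proof to the equivalence $K_{A,\Phi} \gg K \iff \sum_{n=1}^\infty 1/\alpha_n < \infty$.

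Next I would compute the dominance operator $L \colon H(K_{A,\Phi}) \to H(K)$ explicitly in the basis $\Phi$. Writing $f = \sum_n f_n \phi_n \in H(K_{A,\Phi})$ and $g = \sum_n g_n \phi_n \in H(K)$, formula~\eqref{eq:scaled-inner-product} gives $\inprod{f}{g}_{K_{A,\Phi}} = \sum_n f_n g_n / \alpha_n$, so comparing with $\inprod{Lf}{g}_K = \sum_n (Lf)_n g_n$ forces $Lf = \sum_n (f_n/\alpha_n)\phi_n$, i.e.\ $L\phi_n = \phi_n/\alpha_n$. When $\inf_n \alpha_n > 0$ the estimate $\sum_n (f_n/\alpha_n)^2 \leq (\sup_n \alpha_n^{-1}) \sum_n f_n^2/\alpha_n < \infty$ verifies that $L$ indeed maps into $H(K)$.

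For the forward implication, assume $\sum_n 1/\alpha_n < \infty$. Then $\alpha_n \to \infty$, so $\inf_n \alpha_n > 0$, and Proposition~\ref{prop:scaled-relations} gives $H(K) \subset H(K_{A,\Phi})$, making $L$ well defined. Evaluating the trace along the orthonormal basis $\psi_n = \sqrt{\alpha_n}\phi_n$ of $H(K_{A,\Phi})$ yields $L\psi_n = \phi_n/\sqrt{\alpha_n}$ and, using $\norm[0]{\phi_n}_{K_{A,\Phi}}^2 = 1/\alpha_n$ from~\eqref{eq:scaled-RKHS},
\begin{equation*}
  \tr(L) = \sum_{n=1}^\infty \inprod{L\psi_n}{\psi_n}_{K_{A,\Phi}} = \sum_{n=1}^\infty \inprod[\big]{\phi_n/\sqrt{\alpha_n}}{\sqrt{\alpha_n}\phi_n}_{K_{A,\Phi}} = \sum_{n=1}^\infty \frac{1}{\alpha_n} < \infty,
\end{equation*}
so $K_{A,\Phi} \gg K$ by~\eqref{eq:nuclear-dominance}, and Driscoll's theorem gives $\mathbb{P}[X \in H(K_{A,\Phi})] = 1$.

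For the converse, assume $\sum_n 1/\alpha_n = \infty$. If $\inf_n \alpha_n = 0$ there is a subsequence along which $\alpha_{n_k} \to 0$, so $H(K) \not\subset H(K_{A,\Phi})$ by Proposition~\ref{prop:scaled-relations} and $K_{A,\Phi}$ fails to dominate $K$ at all. Otherwise $\inf_n \alpha_n > 0$, $L$ is as above, and the same trace computation now gives $\tr(L) = \infty$, so the dominance is not nuclear. In either case $K_{A,\Phi} \not\gg K$, and Driscoll's theorem yields $\mathbb{P}[X \in H(K_{A,\Phi})] = 0$. The main potential pitfall is purely bookkeeping, namely keeping straight the two different inner products when deriving $L\phi_n = \phi_n/\alpha_n$ and verifying that the trace formula is basis independent; once this is in place the theorem drops out of Driscoll's theorem without further work.
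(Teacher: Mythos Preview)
Your proposal is correct and follows essentially the same approach as the paper: apply Theorem~\ref{thm:driscoll} with $R=K_{A,\Phi}$, compute the dominance operator explicitly as $L\phi_n=\phi_n/\alpha_n$, evaluate $\tr(L)=\sum_n 1/\alpha_n$ in the basis $(\sqrt{\alpha_n}\phi_n)_n$, and handle the non-dominance case via Proposition~\ref{prop:scaled-relations}. The only cosmetic difference is that the paper splits into cases according to whether $H(K)\subset H(K_{A,\Phi})$ while you split according to whether $\sum_n 1/\alpha_n$ converges; your explicit mention of separability of $H(K_{A,\Phi})$ is a detail the paper leaves implicit.
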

\begin{proof}
  Assume first that the scaling is such that $H(K) \subset H(K_{A,\Phi})$.
  It is easy to verify using Proposition~\ref{thm:scaled-RKHS} that the dominance operator $L \colon H(K_{A,\Phi}) \to H(K)$ is given by
  $Lf = \sum_{n=1}^\infty f_n \alpha_n^{-1} \phi_n$ for any $f = \sum_{n=1}^\infty f_n \phi_n \in H(K_{A,\Phi})$.
  Because $(\sqrt{\alpha_n} \phi_n)_{n=1}^\infty$ is an orthonormal basis of $H(K_{A,\Phi})$ and $L(\sqrt{\alpha_n} \phi_n) = 1/\sqrt{\alpha_n}$, the nuclear dominance condition~\eqref{eq:nuclear-dominance} is
  \begin{equation*}
    \tr(L) = \sum_{n=1}^\infty \inprod[\big]{\sqrt{\alpha_n} L \phi_n}{\sqrt{\alpha_n} \phi_n}_{K_{A,\Phi}} = \sum_{n=1}^\infty \inprod{\phi_n}{\phi_n}_{K_{A,\Phi}} = \sum_{n=1}^\infty \frac{1}{\alpha_n},
  \end{equation*}
  and the claim follows from Theorem~\ref{thm:driscoll} since Lemma~\ref{lemma:scaled-dR-metric} guarantees that $d_{R}$ for $R = K_{A,\Phi}$ is a metric.
  Assume then that $H(K) \not\subset H(K_{A,\Phi})$.
  It is trivial that $K_{A,\Phi} \not\gg K$.
  Thus $\mathbb{P}[X \in H(K_{A,\Phi})] = 0$.
  If we had $\sum_{n=1}^\infty \alpha_n^{-1} < \infty$, then it would necessarily hold that $\sup_{n \geq 1} \alpha_n^{-1} < \infty$ and consequently $H(K) \subset H(K_{A,\Phi})$ by Proposition~\ref{prop:scaled-relations}, which is a contradiction. Therefore $\sum_{n=1}^\infty \alpha_n^{-1} = \infty$.
\end{proof}

\subsection{Sample Support Sets} \label{sec:sample-sets}

Theorems~\ref{thm:driscoll} and~\ref{thm:driscoll-scaled} motivate us to define the sample support set of a Gaussian process with respect to a collection of kernels as the largest set on the ``boundary'' between their induced RKHSs of probabilities one and zero.
Let $\mathfrak{R}$ be a collection of continuous kernels $R$ on $T$ for which $d_R$ is a metric and $H(\mathfrak{R})$ the corresponding set of RKHSs.
Every element of $H(\mathfrak{R})$ is a subset of $C(T)$.
By the generalised Driscoll's theorem each element of $H(\mathfrak{R})$ has $\mu_X$-measure one or zero, depending on the nuclear dominance condition.
Define the disjoint sets
\begin{equation*}
  \mathfrak{R}_1(K) = \Set{R \in \mathfrak{R} }{R \gg K} \quad \text{ and } \quad \mathfrak{R}_0(K) = \Set{R \in \mathfrak{R} }{R \not\gg K}
\end{equation*}
which partition $\mathfrak{R}$.
We assume that both $\mathfrak{R}_1(K)$ and $\mathfrak{R}_0(K)$ are non-empty and introduce the notion of a sample support set.

\begin{definition}[Sample support set] Let $\mathcal{S}(\mathfrak{R}) = \sigma(H(\mathfrak{R}))$ be the $\sigma$-algebra generated by $H(\mathfrak{R})$. The \emph{sample support set}, $S_\mathfrak{R}(K)$, of the Gaussian process $(X(t))_{t \in T} \sim \mathcal{GP}(0, K)$ with respect to $\mathfrak{R}$ is the largest subset of $C(T)$ such that 
  $S_\mathfrak{R}(K) \subset H$ for every $H \in \mathcal{S}(\mathfrak{R})$ such that $\mu_X(H) = 1$.
\end{definition}

\begin{proposition} \label{prop:sample-set-difference}
  It holds that
  \begin{equation} \label{eq:sample-set-difference}
    S_\mathfrak{R}(K) = \bigcap_{ R_1 \in \mathfrak{R}_1(K)} H(R_1) \setminus \bigcup_{ R_0 \in \mathfrak{R}_0(K) } H(R_0).
  \end{equation}
\end{proposition}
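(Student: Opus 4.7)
The plan is to prove the set equality by two inclusions. Let $S'$ abbreviate the right-hand side of~\eqref{eq:sample-set-difference} and write $\mathcal{M} = \{ H \in \mathcal{S}(\mathfrak{R}) : \mu_X(H) = 1 \}$, so that by definition $S_\mathfrak{R}(K) = \bigcap_{H \in \mathcal{M}} H$. The inclusion $S_\mathfrak{R}(K) \subset S'$ falls straight out of the generalised Driscoll theorem (Theorem~\ref{thm:driscoll}): for each $R_1 \in \mathfrak{R}_1(K)$ the set $H(R_1)$ has $\mu_X$-measure one and hence lies in $\mathcal{M}$, while for each $R_0 \in \mathfrak{R}_0(K)$ the set $H(R_0)$ has $\mu_X$-measure zero, so its complement $C(T) \setminus H(R_0) \in \mathcal{S}(\mathfrak{R})$ lies in $\mathcal{M}$. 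Intersecting these memberships over all admissible $R_1$ and $R_0$ yields $S_\mathfrak{R}(K) \subset S'$.

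The reverse inclusion $S' \subset S_\mathfrak{R}(K)$ is the substantive step, and I would establish it by a coincidence-of-measures argument. First I would observe that $\mu_X$ restricted to $\mathcal{S}(\mathfrak{R})$ is $\{0,1\}$-valued, since the family $\{ H \in \mathcal{B} : \mu_X(H) \in \{0,1\} \}$ is a $\sigma$-algebra and it contains every generator $H(R)$ by Theorem~\ref{thm:driscoll}. Now fix $f \in S'$ and consider the Dirac measure $\delta_f$ on $(C(T), \mathcal{B})$, which is itself $\{0,1\}$-valued. Set
\[
  \mathcal{E} = \{ H \in \mathcal{S}(\mathfrak{R}) : \mu_X(H) = \delta_f(H) \}.
\]
This $\mathcal{E}$ is a $\sigma$-algebra: closure under complements uses the $\{0,1\}$-valuedness of both measures, and closure under countable unions uses that each measure assigns value one to a union precisely when it assigns value one to some member. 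The defining property of $f \in S'$ is exactly that $f \in H(R_1)$ for every $R_1 \in \mathfrak{R}_1(K)$ and $f \notin H(R_0)$ for every $R_0 \in \mathfrak{R}_0(K)$, which forces $\mu_X$ and $\delta_f$ to agree on every generator; hence $\mathcal{E} = \sigma(H(\mathfrak{R})) = \mathcal{S}(\mathfrak{R})$. Consequently $\mu_X(H) = 1$ implies $\delta_f(H) = 1$, i.e.\ $f \in H$, for every $H \in \mathcal{M}$, and so $f \in S_\mathfrak{R}(K)$.

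The main obstacle is that $S_\mathfrak{R}(K)$ is defined via an intersection over the potentially uncountable family $\mathcal{M}$, whereas $S'$ is assembled only from the generators $H(R)$ of $\mathcal{S}(\mathfrak{R})$. The approach above sidesteps this by recasting the reverse inclusion as the coincidence of two $\{0,1\}$-valued measures on $\mathcal{S}(\mathfrak{R})$, so that agreement on generators propagates to the full $\sigma$-algebra by a routine monotone-class-style verification; no completion arguments or finer measure-theoretic machinery appear to be required.
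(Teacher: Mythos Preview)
Your proof is correct. The first inclusion $S_\mathfrak{R}(K) \subset S'$ matches the paper's argument in substance, though you phrase it more directly by noting that each $H(R_1)$ and each complement $C(T)\setminus H(R_0)$ lies in $\mathcal{M}$, whereas the paper argues by contradiction.

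For the reverse inclusion $S' \subset S_\mathfrak{R}(K)$ your treatment is actually more explicit than the paper's. The paper simply records that $\mu_X$ is $\{0,1\}$-valued on $\mathcal{S}(\mathfrak{R})$ and then asserts that ``it is therefore clear'' that $S'$ is contained in every measure-one element of $\mathcal{S}(\mathfrak{R})$, without spelling out why agreement on the generators propagates to the whole $\sigma$-algebra. Your coincidence-of-measures device, comparing $\mu_X$ with the Dirac measure $\delta_f$ and checking that $\mathcal{E}=\{H:\mu_X(H)=\delta_f(H)\}$ is a $\sigma$-algebra containing $H(\mathfrak{R})$, is a clean and standard way to make that step rigorous. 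The underlying idea is the same as what the paper intends, but you have supplied the verification the paper leaves implicit.
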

\begin{proof}
  Suppose that there is $f \in S_\mathfrak{R}(K)$ which is not contained in the set on the right-hand side of~\eqref{eq:sample-set-difference}.
  That is, we have either
  $f \notin \cap_{R_1 \in \mathfrak{R}_1(K)} H(R_1)$ or $f \in \cup_{R_0 \in \mathfrak{R}_0(K)} H(R_0)$.
  In the former case there is $R_1 \in \mathfrak{R}_1(K)$ such that $f \notin H(R_1)$. But because $\mu_X(H(R_1)) = 1$ and $S_\mathfrak{R}(K) \subset H(R_1)$ by definition, this violates the assumption that $f \in S_\mathfrak{R}(K)$.
  In the latter case there is $R_0 \in \mathfrak{R}_0(K)$ such that $f \in H(R_0)$. As $\mu_X(H(R_0)) = 0$, we have for any $R_1 \in \mathfrak{R}_1$ that \sloppy{${\mu_X( H(R_1) \setminus H(R_0)) = 1}$}. But since $f \notin H(R_1) \setminus H(R_0)$, the assumption that $f \in S_\mathfrak{R}(K)$ is again violated and we conclude that $S_\mathfrak{R}(K) \subset \cap_{ R_1 \in \mathfrak{R}_1(K)} H(R_1) \setminus \cup_{ R_0 \in \mathfrak{R}_0(K) } H(R_0)$.
  
  Since all elements of $H(\mathfrak{R})$ are either of measure zero or one, so are those of $\mathcal{S}(\mathfrak{R})$.
  It is therefore clear that $\cap_{ R_1 \in \mathfrak{R}_1(K)} H(R_1) \setminus \cup_{ R_0 \in \mathfrak{R}_0(K) } H(R_0)$ is contained in every $H \in \mathcal{S}(\mathfrak{R})$ such that $\mu_X(H) = 1$.
  Consequently, $\cap_{ R_1 \in \mathfrak{R}_1(K)} H(R_1) \setminus \cup_{ R_0 \in \mathfrak{R}_0(K) } H(R_0) \subset S_\mathfrak{R}(K)$.
  This concludes the proof.
\end{proof}

The sample support set is the largest set which is contained in every set of probability one under the law of $X$ that can be expressed in terms of countably many elementary set operations of the RKHSs $H(R)$ for $R \in \mathfrak{R}$.
The larger $\mathfrak{R}$ is, the more precisely $S_\mathfrak{R}(K)$ describes the samples of $X$. But there is an important caveat.
If $\mathfrak{R}$ is countable, the sample support set is in the $\sigma$-algebra~$\mathcal{B}$, defined in~\eqref{eq:B-sigma-algebra}, and has $\mu_X$-measure one.
However, when $\mathfrak{R}$ is uncountable and does not contain countable subsets $\mathfrak{R}_1'(K) \subset \mathfrak{R}_1(K)$ and $\mathfrak{R}_0'(K) \subset \mathfrak{R}_0(K)$ such that
\begin{equation*}
  \bigcap_{ R_1 \in \mathfrak{R}_1(K) } H(R_1) = \bigcap_{ R_1 \in \mathfrak{R}_1'(K) } H(R_1) \quad \text{ and } \quad \bigcup_{ R_0 \in \mathfrak{R}_0(K) } H(R_0) = \bigcup_{ R_0 \in \mathfrak{R}_0'(K) } H(R_0),
\end{equation*}
it cannot be easily determined if $S_\mathfrak{R}(K)$ is an element of $\mathcal{B}$.

  We are mainly interested in sample support sets with respect to $\mathfrak{R}$ which consist of all scaled kernels (and will in Theorem~\ref{thm:sample-set-all} characterise this set).
It is nevertheless conceivable that one may want to or be forced to work with less rich set of kernels---scaled or not---and with such an eventuality in mind we have introduced the more general concept of a sample support set.
If $\mathfrak{R}$ is a collection of scaled kernels, the sample support set takes a substantially more concrete form.
For this purpose we introduce the concept an approximately constant sequence, which is inspired by the results collected in~\citep[\S~41]{Knopp1951}.

\begin{definition}[Approximately constant sequence] Let $\Sigma$ be a collection of non-negative sequences. A non-negative sequence $(a_n)_{n=1}^\infty$ is said to be $\Sigma$-\emph{approximately constant} if for every \sloppy{${(b_n)_{n=1}^\infty \in \Sigma}$} the series $\sum_{n=1}^\infty b_n$ and $\sum_{n=1}^\infty a_n b_n$ either both converge or diverge.
\end{definition}

  We mention two properties of approximately constant sequences: (i) If $(a_n)_{n=1}^\infty$ and $(a_n')_{n=1}^\infty$ are two $\Sigma$-approximately constant sequences, then so is their sum. (ii) The larger $\Sigma$ is, the fewer $\Sigma$-approximately sequences there are. That is, if $\Sigma_1$ and $\Sigma_2$ are two collections of non-negative sequences such that $\Sigma_1 \subset \Sigma_2$, then a non-negative sequence is $\Sigma_1$-approximately constant if it is $\Sigma_2$-approximately constant.

For the RKHS $H(K)$ and any of its orthonormal basis $\Phi = (\phi_n)_{n=1}^\infty$ we let $\mathcal{R}(\Sigma, \Phi)$ denote the set of all functions $f = \sum_{n=1}^\infty f_n \phi_n$ such that the series converges pointwise on $T$ and $(f_n^2)_{n=1}^\infty$ is a $\Sigma$-approximately constant sequence.
The following theorem provides a crucial connection between sample support sets with respect to scaled kernels and functions defined as orthonormal expansions with approximately constant coefficients.

\begin{theorem} \label{thm:sample-sets}
  Let $\Phi = (\phi_n)_{n=1}^\infty$ be an orthonormal basis of $H(K)$ and $\Sigma_\Phi$ a collection of $\Phi$-scalings of $H(K)$ such that the corresponding scaled kernels are continuous.
  Suppose that $d_K$ is a metric and let $\mathfrak{R} = \Set{ K_{A,\Phi}}{ A \in \Sigma_\Phi }$.
  Then $S_\mathfrak{R}(K) = \mathcal{R}(\Sigma_\Phi, \Phi)$.
\end{theorem}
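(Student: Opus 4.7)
The plan is to unfold Proposition~\ref{prop:sample-set-difference} by inserting the coefficient description of scaled RKHSs from Proposition~\ref{thm:scaled-RKHS} together with the nuclear-dominance criterion for scaled kernels in Theorem~\ref{thm:driscoll-scaled}, then recognise the resulting condition on $(f_n^2)$ as approximate constancy.

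First I would apply Theorem~\ref{thm:driscoll-scaled} to split $\Sigma_\Phi$ into
\begin{equation*}
\Sigma_\Phi^+ = \Set[\big]{A \in \Sigma_\Phi}{\textstyle\sum_n 1/\alpha_n < \infty} \quad \text{and} \quad \Sigma_\Phi^- = \Sigma_\Phi \setminus \Sigma_\Phi^+,
\end{equation*}
which correspond respectively to $\mathfrak{R}_1(K)$ and $\mathfrak{R}_0(K)$ under the bijection $A \mapsto K_{A,\Phi}$. Proposition~\ref{prop:sample-set-difference} then gives
\begin{equation*}
S_\mathfrak{R}(K) = \bigcap_{A \in \Sigma_\Phi^+} H(K_{A,\Phi}) \setminus \bigcup_{A \in \Sigma_\Phi^-} H(K_{A,\Phi}),
\end{equation*}
and Proposition~\ref{thm:scaled-RKHS}(iii) rewrites each $H(K_{A,\Phi})$ as the set of pointwise expansions $f = \sum_n f_n \phi_n$ with $\sum_n f_n^2/\alpha_n < \infty$. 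Combining the two, membership $f \in S_\mathfrak{R}(K)$ amounts to $f \in C(T)$ admitting an expansion $f = \sum_n f_n \phi_n$ whose coefficients satisfy
\begin{equation*}
\sum_n 1/\alpha_n < \infty \; \Longleftrightarrow \; \sum_n f_n^2/\alpha_n < \infty \quad \text{for every } A = (\alpha_n) \in \Sigma_\Phi.
\end{equation*}
Reading the test sequence $b_n = 1/\alpha_n$ this is exactly the approximate constancy of $(f_n^2)$ against $\Sigma_\Phi$, under the natural identification of a scaling with the reciprocal sequence that governs both the dominance condition and the scaled RKHS norm. The forward inclusion $S_\mathfrak{R}(K) \subseteq \mathcal{R}(\Sigma_\Phi, \Phi)$ is then immediate from this characterisation, with pointwise convergence inherited from membership in any single $H(K_{A,\Phi})$ with $A \in \Sigma_\Phi^+$ (which exists because $\mathfrak{R}_1(K)$ is assumed non-empty).

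For the reverse inclusion $\mathcal{R}(\Sigma_\Phi, \Phi) \subseteq S_\mathfrak{R}(K)$ the only genuine point to check is that a formal series $\sum_n f_n \phi_n$ whose squared coefficients are approximately constant actually defines a \emph{continuous} function, since $S_\mathfrak{R}(K) \subset C(T)$. Choosing any $A \in \Sigma_\Phi^+$, approximate constancy forces $\sum_n f_n^2/\alpha_n < \infty$, and the Cauchy--Schwarz estimate
\begin{equation*}
\sum_n \abs[0]{f_n \phi_n(t)} \leq \bigg(\sum_n \frac{f_n^2}{\alpha_n}\bigg)^{1/2} \bigg(\sum_n \alpha_n \phi_n(t)^2\bigg)^{1/2} < \infty
\end{equation*}
(with the second factor finite because $A$ is a $\Phi$-scaling) gives absolute pointwise convergence on $T$. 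Since this identifies $f$ with an element of $H(K_{A,\Phi})$ and $K_{A,\Phi}$ is continuous by hypothesis, $f \in C(T)$. Approximate constancy then places $f$ in every $H(R_1)$ with $R_1 \in \mathfrak{R}_1(K)$ and outside every $H(R_0)$ with $R_0 \in \mathfrak{R}_0(K)$, so $f \in S_\mathfrak{R}(K)$ by Proposition~\ref{prop:sample-set-difference}.

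The main obstacle I anticipate is the bookkeeping around the reciprocation $\alpha_n \leftrightarrow 1/\alpha_n$: each scaling $A = (\alpha_n)$ enters the construction of $K_{A,\Phi}$ through the positive weights $\alpha_n$, but it is the reciprocal sequence that appears both in the nuclear dominance criterion $\sum 1/\alpha_n < \infty$ and as the weight $1/\alpha_n$ defining $\norm[0]{f}_{K_{A,\Phi}}^2$. Once this identification is stated explicitly the argument is a direct transcription, with the only analytic ingredients being Theorem~\ref{thm:driscoll-scaled}, Proposition~\ref{thm:scaled-RKHS}, and one application of Cauchy--Schwarz to secure pointwise convergence and continuity.
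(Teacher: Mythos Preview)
Your proposal is correct and follows essentially the same route as the paper: split $\Sigma_\Phi$ via Theorem~\ref{thm:driscoll-scaled}, unfold $S_\mathfrak{R}(K)$ through Proposition~\ref{prop:sample-set-difference}, and translate membership in each scaled RKHS into the coefficient condition from Proposition~\ref{thm:scaled-RKHS}. You are slightly more explicit than the paper in two places---invoking Proposition~\ref{prop:sample-set-difference} by name and verifying continuity in the reverse inclusion via $f \in H(K_{A,\Phi}) \subset C(T)$---and you rightly flag the reciprocal bookkeeping $\alpha_n \leftrightarrow 1/\alpha_n$, which the paper's proof handles by the same silent identification.
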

\begin{proof}
  Note first that, by Lemma~\ref{lemma:scaled-dR-metric}, $d_R$ is a metric for every $R \in \mathfrak{R}$.
  Because every scaling of $H(K)$ has an orthonormal basis that is a scaled version of $(\phi_n)_{n=1}^\infty$, every $f \in S_\mathfrak{R}(K)$ can be written as $f = \sum_{n=1}^\infty f_n \phi_n$ for some real coefficients $f_n$.
  Let $\Sigma_1(K)$ and $\Sigma_0(K)$ stand for the collections of $(\alpha_n)_{n=1}^\infty \in \Sigma_\Phi$ such that $\sum_{n=1}^\infty \alpha_n^{-1} < \infty$ and $\sum_{n=1}^\infty \alpha_n^{-1} = \infty$, respectively.
  Then, by Theorem~\ref{thm:driscoll-scaled}, $K_{A,\Phi} \in \mathfrak{R}_1(K)$ if $A \in \Sigma_1(K)$ and $K_{A,\Phi} \in \mathfrak{R}_0(K)$ if $A \in \Sigma_0(K)$.
  Because, by definition, \sloppy{${S_\mathfrak{R}(K) \subset H(K_{A,\Phi})}$} for any $A \in \Sigma_1(K)$ and $S_\mathfrak{R}(K) \cap H(K_{A,\Phi}) = \emptyset$ for any $A \in \Sigma_0(K)$ it follows that for every $f \in S_\mathfrak{R}(K)$ and any $(\alpha_n)_{n=1}^\infty \in \Sigma_\Phi$ we have
  \begin{equation*}
    \sum_{n=1}^\infty \frac{f_n^2}{\alpha_n} < \infty \:\: \text{ and } \:\: \sum_{n=1}^\infty \frac{1}{\alpha_n} < \infty \quad \text{ or } \quad \sum_{n=1}^\infty \frac{f_n^2}{\alpha_n} = \infty \:\: \text{ and } \:\: \sum_{n=1}^\infty \frac{1}{\alpha_n} = \infty.
  \end{equation*}
  That is, $(f_n^2)_{n=1}^\infty$ is a $\Sigma_\Phi$-approximately constant sequence and thus $S_\mathfrak{R}(K) \subset \mathcal{R}(\Sigma_\Phi, \Phi)$.
  Conversely, if $f \in \mathcal{R}(\Sigma_\Phi, \Phi)$, then $f \in H(K_{A,\Phi})$ for every $A \in \Sigma_1(K)$ and $f \notin H(K_{A,\Phi})$ for every $A \in \Sigma_0(K)$.
  Hence $f \in S_\mathfrak{R}(K)$ and thus $S_\mathfrak{R}(K) = \mathcal{R}(\Sigma_\Phi, \Phi)$.
\end{proof}

Next we use Theorem~\ref{thm:sample-sets} to describe the sample support set more concretely.

\subsection{Sample Support Sets for Scaled RKHSs} \label{sec:sample-set-all}

Let $\Sigma$ be the set of all positive sequences. Then the collection of $\Sigma$-approximately constant sequences is precisely the collection of non-negative sequences $(a_n)_{n=1}^\infty$ such that
\begin{equation} \label{eq:app-constant-1}
  \liminf_{n \to \infty} a_n > 0 \quad \text{ and } \quad \sup_{n \geq 1} a_n < \infty.
\end{equation}
For suppose that there existed a $\Sigma$-approximately constant sequence $(a_n)_{n=1}^\infty$ that violated~\eqref{eq:app-constant-1}.
If $\liminf_{n \to \infty} a_n = 0$, then there is a subsequence $(a_{n_m})_{m=1}^\infty$ such that $a_{n_m} \leq 2^{-m}$ for all $m \in \N$.
Let $(b_n)_{n=1}^\infty \in \Sigma$ be a sequence such that $b_n = 2^{-n} a_n^{-1}$ for $n \notin (n_m)_{m=1}^\infty$ and $b_{n_m} = 1$ for $m \in \N$.
Then $\sum_{n=1}^\infty b_n$ diverges but
\begin{equation*}
    \sum_{n=1}^\infty a_n b_n = \sum_{n \notin (n_m)_{m=1}^\infty} a_n b_n + \sum_{m=1}^\infty a_{n_m} b_{n_m} \leq \sum_{n \notin (n_m)_{m=1}^\infty} 2^{-n} + \sum_{m=1}^\infty 2^{-m} < \infty,
\end{equation*}
which contradicts the assumption that $(a_n)_{n=1}^\infty$ is a $\Sigma$-approximately constant sequence.
A similar argument (with $a_{n_m} \geq 2^m$, $b_n = 2^{-n}$, and $b_{n_m} = 2^{-m}$) shows the second condition in~\eqref{eq:app-constant-1} cannot be violated either; thus every $\Sigma$-approximately constant sequence satisfies~\eqref{eq:app-constant-1}.
A sequence satisfying~\eqref{eq:app-constant-1} is trivially $\Sigma$-approximately constant because the conditions imply the existence of constants $0 < c_1 \leq c_2$ such that $c_1 \leq a_n \leq c_2$ for all sufficiently large $n$.
This, together with Theorem~\ref{thm:sample-sets}, yields the following theorem which we consider the main result of this article.
The full proof is more complicated than the above argument as we cannot assume that every positive sequence is a scaling of $H(K)$.

\begin{theorem} \label{thm:sample-set-all} Let $\Phi = (\phi_n)_{n=1}^\infty$ be an orthonormal basis of $H(K)$ and suppose that there is a $\Phi$-scaling $A = (\alpha_n)_{n=1}^\infty$ of $H(K)$ such that $\sum_{n=1}^\infty \alpha_n^{-1} < \infty$ and $K_{A,\Phi}$ is continuous.
  Let $\Sigma_\Phi$ be the collection of $\Phi$-scalings of $H(K)$ such that the corresponding scaled kernels are continuous.
  Suppose that $d_K$ is a metric and let $\mathfrak{R} = \Set{K_{A,\Phi}}{A \in \Sigma_\Phi}$.
  Then $S_{\mathfrak{R}}(K)$ is non-empty and consists precisely of the functions $f = \sum_{n=1}^\infty f_n \phi_n$ such that
  \begin{equation} \label{eq:app-constant-1-f}
    \liminf_{n \to \infty} f_n^2 > 0 \quad \text{ and } \quad \sup_{n \geq 1} f_n^2 < \infty.
  \end{equation}
\end{theorem}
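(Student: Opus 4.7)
My plan is to apply Theorem~\ref{thm:sample-sets} and then refine the all-positive-sequence argument sketched in the paragraph preceding the theorem so that each witness sequence is an actual $\Phi$-scaling with continuous scaled kernel. By Theorem~\ref{thm:sample-sets}, $S_\mathfrak{R}(K) = \mathcal{R}(\Sigma_\Phi, \Phi)$, so it suffices to show that a non-negative sequence $(f_n^2)$ is $\Sigma_\Phi$-approximately constant if and only if~\eqref{eq:app-constant-1-f} holds.

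The easy direction is immediate: if $0 < c_1 \leq f_n^2 \leq c_2 < \infty$ for all sufficiently large $n$, then $\sum b_n$ and $\sum f_n^2 b_n$ share convergence status for every positive sequence $(b_n)$. Pointwise convergence of $f = \sum f_n \phi_n$ follows because $\sum f_n^2/\alpha_n^* < \infty$ (using $\sum 1/\alpha_n^* < \infty$ and the eventual bound on $f_n^2$) places $f$ in $H(K_{A^*,\Phi})$, where the expansion converges pointwise by Cauchy--Schwarz.

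For the converse I argue the contrapositive, producing a distinguishing scaling $A \in \Sigma_\Phi$ in each failure case. If $\liminf f_n^2 = 0$, pass to a subsequence $(n_m)$ with $\alpha_{n_m}^* \geq 1$ and $f_{n_m}^2 \leq 2^{-m}$, and set $\alpha_n = \alpha_n^*$ for $n \notin (n_m)$ and $\alpha_{n_m} = 1$. Then $\sum 1/\alpha_n = \infty$ while $\sum f_n^2/\alpha_n \leq \norm[0]{f}_{K_{A^*,\Phi}}^2 + \sum_m 2^{-m} < \infty$, using $f \in S_\mathfrak{R}(K) \subset H(K_{A^*,\Phi})$. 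If $\sup f_n^2 = \infty$, pass to $(n_m)$ with $\alpha_{n_m}^* \geq m^2$ and $f_{n_m}^2 \geq 4^m$, and set $\alpha_{n_m} = m^2$; then $\sum 1/\alpha_n < \infty$ but $\sum f_n^2/\alpha_n \geq \sum_m 4^m/m^2 = \infty$. In both constructions $\alpha_n \leq \alpha_n^*$ for all $n$, so $\sum_n \alpha_n \phi_n(t)^2 \leq K_{A^*,\Phi}(t,t) < \infty$ and $A$ is a $\Phi$-scaling.

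The main obstacle is continuity of $K_{A,\Phi}$, required to place $A$ in $\Sigma_\Phi$. Using the decomposition
\begin{equation*}
K_{A,\Phi}(t,t') = K_{A^*,\Phi}(t,t') + \sum_m (\alpha_{n_m} - \alpha_{n_m}^*) \phi_{n_m}(t) \phi_{n_m}(t'),
\end{equation*}
the tail of the second series is controlled via Cauchy--Schwarz by the tails of $K_{A^*,\Phi}(t,t) = \sum_n \alpha_n^* \phi_n(t)^2$. Dini's theorem applied on each compact subset of $T$ (to the monotone convergence of continuous partial sums to the continuous limit) supplies uniform convergence of these tails, so $K_{A,\Phi}$ is the uniform limit of continuous functions on every compact subset of $T \times T$; since $T$ is a metric space, evaluating at any convergent sequence together with its limit gives continuity of $K_{A,\Phi}$ on all of $T \times T$. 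Finally, non-emptiness of $S_\mathfrak{R}(K)$ is witnessed by $f(t) = \sum_n \phi_n(t)$: by Cauchy--Schwarz applied to $1/\sqrt{\alpha_n^*}$ and $\sqrt{\alpha_n^*}\phi_n(t)$ one has $\sum_n |\phi_n(t)| \leq (\sum_n 1/\alpha_n^*)^{1/2} K_{A^*,\Phi}(t,t)^{1/2} < \infty$, and its coefficients $f_n = 1$ trivially satisfy~\eqref{eq:app-constant-1-f}.
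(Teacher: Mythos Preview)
Your proof is correct and follows essentially the same strategy as the paper: reduce via Theorem~\ref{thm:sample-sets} to characterising $\Sigma_\Phi$-approximately constant sequences, then argue the contrapositive in each failure case by modifying the given summable scaling $A^*$ on a suitable subsequence. Your specific witness scalings differ in detail from the paper's (and your treatment of the $\liminf f_n^2 = 0$ case, exploiting $f\in H(K_{A^*,\Phi})$ directly, is arguably cleaner since it does not depend on first establishing $\sup f_n^2<\infty$), and you supply the Dini argument for continuity of $K_{A,\Phi}$ that the paper only asserts via the inclusion $H(K_{B,\Phi})\subset H(K_{A,\Phi})$.
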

\begin{proof} By Theorem~\ref{thm:sample-sets}, $S_{\mathfrak{R}}(K) = \mathcal{R}(\Sigma_\Phi, \Phi)$ and the assumption that there is a scaling $A = (\alpha_n)_{n=1}^\infty$ of $H(K)$ such that $\sum_{n=1}^\infty \alpha_n^{-1} < \infty$ implies that $\mathcal{R}(\Sigma_\Phi, \Phi)$ is non-empty. Hence we have to show that functions in $\mathcal{R}(\Sigma_\Phi, \Phi)$ satisfy~\eqref{eq:app-constant-1-f}.
  Suppose that there is a function \sloppy{${f = \sum_{n=1}^\infty f_n \phi_n \in \mathcal{R}(\Sigma_\Phi, \Phi)}$} that violates~\eqref{eq:app-constant-1-f}.
  If $\sup_{n \geq 1} f_n^2 = \infty$, then there is a subsequence $(f_{n_m}^2)_{m=1}^\infty$ such that $f_{n_m}^2 \geq 2^m$ for all $m \in \N$.
  Define a sequence $B = (\beta_n)_{n=1}^\infty$ by setting \sloppy{${\beta_{n_m} = 2^m \leq f_{n_m}^2}$} for $m$ such that $f_{n_m}^2 < \alpha_{n_m}$ and $\beta_n = \alpha_n$ for all other $n$.
  Then $B$ is a $\Phi$-scaling of $H(K)$ since $\beta_n \preceq \alpha_n$ and
  \begin{equation*}
    \sum_{n=1}^\infty \beta_n^{-1} \leq \sum_{n=1}^\infty \alpha_n^{-1} + \sum_{m=1}^\infty 2^{-m} < \infty.
  \end{equation*}
  Moreover, Proposition~\ref{prop:scaled-relations} implies that $H(K_{B,\Phi}) \subset H(K_{A,\Phi})$ and it thus follows from the continuity of $K_{A,\Phi}$ that $K_{B,\Phi}$ is continuous, so that $B \in \Sigma_\Phi$.
  However,
  \begin{align*}
      \sum_{n=1}^\infty \frac{f_n^2}{\beta_n} &= \sum_{ n \notin (n_m)_{m=1}^\infty } \frac{f_n^2}{\beta_n} + \sum_{\substack{m \in \N \\ f_{n_m}^2 \geq \alpha_{n_m}}} \frac{f_{n_m}^2}{\beta_{n_m}} + \sum_{\substack{m \in \N \\ f_{n_m}^2 < \alpha_{n_m}}} \frac{f_{n_m}^2}{\beta_{n_m}} \\
      &= \sum_{ n \notin (n_m)_{m=1}^\infty } \frac{f_n^2}{\alpha_n} + \sum_{\substack{m \in \N \\ f_{n_m}^2 \geq \alpha_{n_m}}} \frac{f_{n_m}^2}{\alpha_{n_m}} + \sum_{\substack{m \in \N \\ f_{n_m}^2 < \alpha_{n_m}}} \frac{f_{n_m}^2}{2^m} \\
      &\geq \sum_{ n \notin (n_m)_{m=1}^\infty } \frac{f_n^2}{\alpha_n} + \sum_{\substack{m \in \N \\ f_{n_m}^2 \geq \alpha_{n_m}}} 1 + \sum_{\substack{m \in \N \\ f_{n_m}^2 < \alpha_{n_m}}} 1 \\
      &= \sum_{ n \notin (n_m)_{m=1}^\infty } \frac{f_n^2}{\alpha_n} + \sum_{m=1}^\infty 1 \\
      &= \infty,
  \end{align*}
  which contradicts the assumption that $(f_n^2)_{n=1}^\infty$ is $\Sigma_\Phi$-approximately constant.
  On the other hand, if $\liminf_{n \to \infty} f_n^2 = 0$, then there is a subsequence $(f_{n_m})_{m=1}^\infty$ such that $f_{n_m}^2 \leq 2^{-m}$ for all \sloppy{${m \in \N}$}.
  The sequence $B = (\beta_n)_{n=1}^\infty$ defined as $\beta_{n_m} = 1$ for $m \in \N$ and $\beta_n = \alpha_n f_n^2$ for other $n$ is a $\Phi$-scaling of $H(K)$ and $K_{B,\Phi} \in \Sigma_\Phi$ because we have proved that $\sup_{n \geq 1} f_n^2 < \infty$.
  Clearly $\sum_{n=1}^\infty \beta_n^{-1} = \infty$ but
  \begin{equation*}
    \sum_{n=1}^\infty \frac{f_n^2}{\beta_n} = \sum_{n \notin (n_m)_{m=1}^\infty} \frac{f_n^2}{\beta_n} + \sum_{m=1}^\infty \frac{f_{n_m}^2}{\beta_{n_m}} \leq \sum_{n \notin (n_m)_{m=1}^\infty} \frac{1}{\alpha_n} + \sum_{m=1}^\infty 2^{-m} < \infty,
  \end{equation*}
  which again contradicts the assumption that $(f_n^2)_{n=1}^\infty$ is $\Sigma_\Phi$-approximately constant.
\end{proof}

  Recall from Section~\ref{sec:scaled-rkhs} that a scaled RKHS depends on the ordering of the orthonormal basis of $H(K)$.
  However, the sample support set of Theorem~\ref{thm:sample-set-all} does not depend on the ordering because the characterisation~\eqref{eq:app-constant-1-f} is invariant to permutations.
  That is, let $\pi \colon \N \to \N$ be any permutation.
  Then $f = \sum_{n=1}^\infty f_n \phi_n = \sum_{n=1}^\infty f_{\pi(n)} \phi_{\pi(n)}$ but it is clear that
  \begin{equation*}
    \sup_{n \geq 1} f_n^2 = \sup_{n \geq 1} f_{\pi(n)}^2 \quad \text{ and } \quad \liminf_{n \to \infty} f_n^2 = \liminf_{n \to \infty} f_{\pi(n)}^2,
  \end{equation*}
  so that which of the bases $(\phi_n)_{n=1}^\infty$ and $(\phi_{\pi(n)})_{n=1}^\infty$ the function $f$ is expanded in does not matter.

Theorem~\ref{thm:not-countable} below demonstrates that the sample support set of Theorem~\ref{thm:sample-set-all} cannot be expressed in terms of countably many elementary set operations of scaled RKHSs---and thus could be non-measurable.
The reason for this is that there does not exist a useful notion of a boundary between convergent and divergent series, a result which is partially contained in Lemma~\ref{lemma:no-series-boundary}.
The lemma is a modified version of a result originally due to du Bois-Reymund and Hadamard~\citep[pp.\@~301--302]{Knopp1951}.
In its proof a classical result by Dini~\citep[p.\@~293]{Knopp1951} is needed.

\begin{lemma}[Dini] \label{lemma:dini} Let $(a_n')_{n=1}^\infty$ be a positive sequence such that $\sum_{n=1}^\infty a_n' < \infty$ and set
  \begin{equation*}
    a_n = \frac{a_n'}{( \sum_{l=n}^\infty a_l' )^{c}}.
  \end{equation*}
  Then $\lim_{n \to \infty} (a_n')^{-1} a_n = \infty$ and the series $\sum_{n=1}^\infty a_n$ converges if and only if $0 \leq c < 1$.
\end{lemma}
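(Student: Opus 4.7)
The plan is to introduce the tail sums $r_n = \sum_{\ell=n}^\infty a_\ell'$, which are positive, strictly decreasing, and satisfy $r_n \to 0$ because $\sum_{\ell=1}^\infty a_\ell'$ converges. With this notation $a_n' = r_n - r_{n+1}$ and $a_n = (r_n - r_{n+1})/r_n^c$, so $(a_n')^{-1} a_n = r_n^{-c}$, which tends to $\infty$ for any $c > 0$. (The case $c = 0$ is degenerate: then $a_n = a_n'$ and the series trivially converges.)

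For the convergence characterisation I would split into three cases. If $0 < c < 1$, monotonicity of $x \mapsto x^{-c}$ on $(0,\infty)$ yields the rectangle-under-the-graph bound
\begin{equation*}
  a_n = (r_n - r_{n+1}) r_n^{-c} \leq \int_{r_{n+1}}^{r_n} x^{-c} \dif x,
\end{equation*}
and telescoping the tail gives $\sum_{n=N}^\infty a_n \leq \int_0^{r_N} x^{-c} \dif x = r_N^{1-c}/(1-c) < \infty$. For $c = 1$ I would exploit the elementary inequality $1 - x \leq \e^{-x}$ applied to $x = a_n \in [0,1)$, which reads $r_{n+1}/r_n = 1 - a_n \leq \e^{-a_n}$ and iterates to
\begin{equation*}
  r_{M+1} \leq r_N \exp\!\Bigl( -\sum_{n=N}^{M} a_n \Bigr) \quad \text{for all } M \geq N.
\end{equation*}
Since $r_{M+1} \to 0$ while the right-hand side would have a strictly positive limit if $\sum_{n=N}^\infty a_n$ were finite, we conclude $\sum_n a_n = \infty$. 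For $c > 1$, divergence is inherited from the $c = 1$ case: for $n$ large enough that $r_n < 1$ we have $r_n^{-c} \geq r_n^{-1}$ and hence $a_n \geq a_n'/r_n$, so direct comparison delivers divergence.

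I expect the $c = 1$ case to be the only genuine obstacle, since it cannot be settled by a direct integral comparison in the style of $0 < c < 1$. The trick of converting $r_{n+1}/r_n = 1 - a_n$ into an exponential product via $1 - x \leq \e^{-x}$ (or, equivalently, a dyadic grouping built on indices $N_k$ chosen so that $r_{N_k} \leq r_{N_{k-1}}/2$, which forces each block of partial sums to contribute at least $1/2$) is the standard workaround. Once that step is in hand, the remaining pieces are routine estimates and the dichotomy in $c$ follows.
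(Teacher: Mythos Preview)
The paper does not prove this lemma; it simply attributes the result to Dini and cites Knopp.  So there is no ``paper's approach'' to compare against, and your proposal is essentially a standalone proof.  Most of it is correct: the tail-sum notation $r_n$, the computation $(a_n')^{-1}a_n = r_n^{-c}\to\infty$ for $c>0$, the integral comparison for $0<c<1$, and the reduction of $c>1$ to $c=1$ are all fine.  (You are also right that the claim $\lim (a_n')^{-1}a_n=\infty$ fails at $c=0$; that is a minor inaccuracy in the lemma as stated.)

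There is, however, a genuine gap in your main argument for $c=1$.  From $r_{n+1}/r_n = 1-a_n \leq \e^{-a_n}$ you correctly iterate to
\[
  r_{M+1} \leq r_N \exp\!\Big(-\sum_{n=N}^{M} a_n\Big),
\]
but this inequality points the wrong way for the conclusion you draw.  If $\sum_{n\geq N} a_n = S<\infty$, the right-hand side tends to $r_N\e^{-S}>0$ while $r_{M+1}\to 0$, and $0\leq r_N\e^{-S}$ is no contradiction.  What you need is a \emph{lower} bound on the partial sums, and $1-x\leq\e^{-x}$ cannot provide one.  The clean fix is exactly the estimate hiding in your parenthetical: since $r_n\leq r_N$ for $n\geq N$,
\[
  \sum_{n=N}^{M} a_n = \sum_{n=N}^{M}\frac{r_n-r_{n+1}}{r_n} \geq \frac{1}{r_N}\sum_{n=N}^{M}(r_n-r_{n+1}) = 1-\frac{r_{M+1}}{r_N} \xrightarrow[M\to\infty]{} 1,
\]
so the tails of $\sum a_n$ do not vanish and the series diverges.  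Your dyadic-blocking alternative is a correct implementation of the same lower bound, but it is not ``equivalent'' to the exponential argument --- the latter simply does not work here.
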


\begin{lemma} \label{lemma:no-series-boundary} For each $m \in \N$, suppose that $(a_{m,n})_{n=1}^\infty$ is a positive sequence such that \sloppy{${\sum_{n=1}^\infty a_{m,n} < \infty}$}.
  Then there is a positive sequence $(a_n)_{n=1}^\infty$ such that
  \begin{equation*}
    \sum_{n=1}^\infty a_n < \infty \quad \text{ and } \quad \lim_{n \to \infty} \frac{a_n}{a_{m,n}} = \infty \quad \text{ for every $m \in \N$.}
  \end{equation*}
\end{lemma}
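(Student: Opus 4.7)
The strategy is to first use Dini's lemma (Lemma~\ref{lemma:dini}) to boost each given summable sequence into a summable sequence whose ratio against the original tends to infinity, and then to combine these countably many boosted sequences by a diagonal-type sum with geometrically decaying weights.

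More concretely, the plan is as follows. For each fixed $m \in \N$, apply Lemma~\ref{lemma:dini} to the positive summable sequence $(a_{m,n})_{n=1}^\infty$ with exponent $c = 1/2$ to produce a positive sequence $(c_{m,n})_{n=1}^\infty$ satisfying $\sum_{n=1}^\infty c_{m,n} < \infty$ and $c_{m,n}/a_{m,n} \to \infty$ as $n \to \infty$. Next, renormalise by setting
\begin{equation*}
  b_{m,n} = \frac{1}{2^m} \cdot \frac{c_{m,n}}{\sum_{l=1}^\infty c_{m,l}},
\end{equation*}
which is a positive rescaling that preserves the limit $b_{m,n}/a_{m,n} \to \infty$ while forcing $\sum_{n=1}^\infty b_{m,n} = 2^{-m}$.

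Finally, define the candidate sequence by $a_n = \sum_{m=1}^\infty b_{m,n}$. Summability on the outside will follow from Tonelli's theorem applied to the nonnegative double sum:
\begin{equation*}
  \sum_{n=1}^\infty a_n = \sum_{n=1}^\infty \sum_{m=1}^\infty b_{m,n} = \sum_{m=1}^\infty \sum_{n=1}^\infty b_{m,n} = \sum_{m=1}^\infty 2^{-m} = 1 < \infty.
\end{equation*}
In particular $a_n$ is finite and strictly positive for every $n$. For every fixed $m$ the termwise domination $a_n \geq b_{m,n}$ yields
\begin{equation*}
  \frac{a_n}{a_{m,n}} \geq \frac{b_{m,n}}{a_{m,n}} \longrightarrow \infty \quad \text{as } n \to \infty,
\end{equation*}
which is the desired conclusion.

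The only subtle point is that we need a single sequence $(a_n)$ that works simultaneously for all $m$, whereas Dini's lemma only handles one $m$ at a time. This is exactly what the geometric weights $2^{-m}$ resolve: they make the diagonal sum summable while the pointwise bound $a_n \geq b_{m,n}$ preserves the divergent-ratio property for each individual $m$. I expect no real obstacle beyond verifying that the ratio $b_{m,n}/a_{m,n}$ is indeed unchanged (up to a positive multiplicative constant) by the renormalisation in the definition of $b_{m,n}$, which is immediate.
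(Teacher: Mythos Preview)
Your proof is correct and arguably cleaner than the paper's, but the two arguments are organised differently. The paper first builds a \emph{single} summable sequence $(a_n')$ that eventually dominates every $a_{m,n}$: it takes the cumulative maxima $\bar{a}_{m,n} = \sum_{k=1}^m a_{k,n}$, chooses tail indices $(n_m)$ so that $\sum_{n \geq n_m} \bar{a}_{m,n} \leq 2^{-m}$, and splices these together diagonally to form $(a_n')$; only then is Dini's lemma applied once to $(a_n')$ to obtain the final $(a_n)$. You instead apply Dini's lemma $m$ times---once to each $(a_{m,n})_n$---and then combine the resulting boosted sequences by a geometrically weighted sum, using Tonelli to control the total. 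Your route avoids the tail-index bookkeeping entirely, at the cost of invoking Dini countably often rather than once. The paper's route has the minor advantage that the intermediate sequence $(a_n')$ already satisfies $a_n' \geq a_{m,n}$ for large $n$, which can be reused elsewhere; your construction gives the same conclusion more directly but does not isolate such an intermediate object.
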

\begin{proof} For each $m \in \N$, define the sequence $(\bar{a}_{m,n})_{n=1}^\infty$ by setting $\bar{a}_{m,n} = \sum_{k=1}^m a_{k,n}$.
  Therefore $\bar{a}_{m,n} \leq \bar{a}_{m+1,n}$ and $\sum_{n=1}^\infty \bar{a}_{m,n} = \sum_{k=1}^m \sum_{n=1}^\infty a_{k,n} < \infty$ for every $m$.
  Consequently, there is a strictly increasing sequence $(n_m)_{m=1}^\infty$ such that $\sum_{n=n_m}^\infty \bar{a}_{m,n} \leq 2^{-m}$ for every $m \geq 2$.
  Set $a_n' = \bar{a}_{1,n}$ when $n < n_2$ and $a_n' = \bar{a}_{m,n}$ when $n_m \leq n < n_{m+1}$ for $m \geq 2$.
  Then
  \begin{equation*}
    \sum_{n=1}^\infty a_n' = \sum_{n=1}^{n_2-1} \bar{a}_{1,n} + \sum_{m=2}^\infty \sum_{n=n_m}^{n_{m+1}-1} \bar{a}_{m,n} \leq \sum_{n=1}^{\infty} \bar{a}_{1,n} + \sum_{m=2}^\infty 2^{-m} < \infty
  \end{equation*}
  and, because $\bar{a}_{m,n} \leq \bar{a}_{m+1,n}$ and $\bar{a}_{m,n} \geq a_{m,n}$, we have $a_n' \geq \bar{a}_{m,n} \geq a_{m,n}$ for all $m$ and $n \geq n_m$.
  Finally, selecting 
  $a_n = a_n' / (\sum_{l=n}^\infty a_l' )^{1/2}$
  yields, by Lemma~\ref{lemma:dini}, a convergent series such that
  \begin{equation*}
    \lim_{n \to \infty} \frac{a_n}{a_{m,n}} \geq \lim_{n \to \infty} \frac{a_n}{a_n'} = \lim_{n \to \infty} \bigg( \sum_{l=n}^\infty a_n' \bigg)^{-1/2} = \infty
  \end{equation*}
  for every $m \in \N$.
  This proves the claim.
\end{proof}

\begin{theorem} \label{thm:not-countable}
  Let $\Phi = (\phi_n)_{n=1}^\infty$ be an orthonormal basis of $H(K)$ and suppose that there is a $\Phi$-scaling $A = (\alpha_n)_{n=1}^\infty$ of $H(K)$ such that $\sum_{n=1}^\infty \alpha_n^{-1} < \infty$ and $K_{A,\Phi}$ is continuous.
  Let $\Sigma_\Phi$ be the collection $\Phi$-scalings of $H(K)$ such that the corresponding scaled kernels are continuous.
  Suppose that $d_K$ is a metric and let $\mathfrak{R} = \Set{K_{A,\Phi}}{A \in \Sigma_\Phi}$.
  For each $m \in \N$, let $A_m = (\alpha_{m,n})_{n=1}^\infty$ and $B_m = (\beta_{m,n})_{n=1}^\infty$ be elements of $\Sigma_\Phi$ such that 
  \begin{equation*}
    \sum_{n=1}^\infty \frac{1}{\alpha_{m,n}} < \infty \quad \text{ and } \quad \sum_{n=1}^\infty \frac{1}{\beta_{m,n}} = \infty.
  \end{equation*}
  If $(X(t))_{t \in T} \sim \mathcal{GP}(0, K)$, then there exists $F \in \mathcal{B}$ such that $\mu_X(F) = 1$ and 
  \begin{equation*}
    S_{\mathfrak{R}}(K) \subsetneq F \subsetneq \bigcap_{m=1}^\infty H(K_{A_m,\Phi}) \setminus \bigcup_{m=1}^\infty H(K_{B_m,\Phi}).
  \end{equation*}
\end{theorem}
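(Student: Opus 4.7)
The plan is to construct $F = H(K_{C,\Phi}) \setminus \bigcup_{m=1}^\infty H(K_{B_m,\Phi})$ for an intermediate $\Phi$-scaling $C = (\gamma_n)_{n=1}^\infty$ supplied by Lemma~\ref{lemma:no-series-boundary}, and then to witness the properness of both inclusions by explicit coefficient sequences supported on a carefully chosen combinatorial set $E \subseteq \N$. First, apply Lemma~\ref{lemma:no-series-boundary} to the countable family $\{(1/\alpha_{m,n})_{n=1}^\infty : m \in \N\}$ of summable sequences to obtain a positive sequence $(a_n)_{n=1}^\infty$ with $\sum_n a_n < \infty$ and $a_n \alpha_{m,n} \to \infty$ for every $m$, and set $\gamma_n = 1/a_n$. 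Then $\sum_n 1/\gamma_n < \infty$ and $\gamma_n/\alpha_{m,n} \to 0$ for every $m$, so in particular $\gamma_n \leq \alpha_{1,n}$ eventually; this makes $C$ a $\Phi$-scaling, and the continuity of $K_{C,\Phi}$ follows from the inclusion $H(K_{C,\Phi}) \subseteq H(K_{A_1,\Phi})$ (Proposition~\ref{prop:scaled-relations}) together with the assumed continuity of $K_{A_1,\Phi}$, exactly as at the end of the proof of Theorem~\ref{thm:sample-set-all}. Hence $C \in \Sigma_\Phi$, and Proposition~\ref{prop:scaled-proper-subset} yields $H(K_{C,\Phi}) \subsetneq H(K_{A_m,\Phi})$ for every $m$. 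Theorem~\ref{thm:driscoll-scaled} then gives $\mu_X(H(K_{C,\Phi})) = 1$ and $\mu_X(H(K_{B_m,\Phi})) = 0$ for every $m$, so $F \in \mathcal{B}$ with $\mu_X(F) = 1$, and the outer inclusions $S_{\mathfrak{R}}(K) \subseteq F \subseteq \bigcap_m H(K_{A_m,\Phi}) \setminus \bigcup_m H(K_{B_m,\Phi})$ are immediate from the characterisation~\eqref{eq:app-constant-1-f}.

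The key combinatorial input for both properness claims is a set $E \subseteq \N$ with $E$ and $\N \setminus E$ both infinite such that $\sum_{n \in E} 1/\beta_{m,n} = \infty$ for every $m \in \N$. Such an $E$ exists by a probabilistic argument: letting $(X_n)_{n=1}^\infty$ be i.i.d.\ Bernoulli$(1/2)$ random variables and $E = \{n : X_n = 1\}$, Kolmogorov's three-series theorem shows $\sum_{n \in E} 1/\beta_{m,n} = \infty$ almost surely for each fixed $m$ (because $\sum_n 1/\beta_{m,n} = \infty$), the strong law makes both $E$ and $\N \setminus E$ almost surely infinite, and a countable intersection of probability-one events singles out a deterministic $E$. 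For $S_{\mathfrak{R}}(K) \subsetneq F$, set $f_n = 1$ on $E$ and $f_n = 0$ elsewhere; the series $\sum_n f_n \phi_n$ converges pointwise on $T$ by the Cauchy--Schwarz estimate in the proof of Proposition~\ref{thm:scaled-RKHS}, since $\sum_n f_n^2/\alpha_{1,n} < \infty$. Then $\liminf_n f_n^2 = 0$ places $f \notin S_{\mathfrak{R}}(K)$; $\sum_n f_n^2/\gamma_n \leq \sum_n 1/\gamma_n < \infty$ places $f \in H(K_{C,\Phi})$; and $\sum_n f_n^2/\beta_{m,n} = \sum_{n \in E} 1/\beta_{m,n} = \infty$ places $f \notin \bigcup_m H(K_{B_m,\Phi})$.

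For $F \subsetneq \bigcap_m H(K_{A_m,\Phi}) \setminus \bigcup_m H(K_{B_m,\Phi})$, select an increasing subsequence $(s_k)_{k=1}^\infty \subseteq \N \setminus E$ with $\gamma_{s_k}/\alpha_{m,s_k} \leq 2^{-k}$ for every $m \leq k$; existence follows because for each fixed $k$ the intersection $\bigcap_{m \leq k} \{n : \gamma_n/\alpha_{m,n} \leq 2^{-k}\}$ is cofinite (each quotient tends to zero) and hence meets the infinite set $\N \setminus E$ beyond any prescribed index. Define $g_n = 1$ on $E$, $g_{s_k} = \sqrt{\gamma_{s_k}}$ for each $k$, and $g_n = 0$ on the remaining indices. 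Then $\sum_n g_n^2/\alpha_{m,n} = \sum_{n \in E} 1/\alpha_{m,n} + \sum_k \gamma_{s_k}/\alpha_{m,s_k}$ is finite because the tail of the second series is bounded by $\sum_{k \geq m} 2^{-k}$, so $g \in \bigcap_m H(K_{A_m,\Phi})$; meanwhile $\sum_n g_n^2/\gamma_n \geq \sum_k \gamma_{s_k}/\gamma_{s_k} = \infty$ gives $g \notin H(K_{C,\Phi})$, and $\sum_n g_n^2/\beta_{m,n} \geq \sum_{n \in E} 1/\beta_{m,n} = \infty$ gives $g \notin \bigcup_m H(K_{B_m,\Phi})$. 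The main obstacle is the coordinated choice of the bumps $g_{s_k}^2 = \gamma_{s_k}$: they must be simultaneously large enough to force divergence of $\sum_n g_n^2/\gamma_n$ yet small enough to keep $\sum_n g_n^2/\alpha_{m,n}$ finite for every $m$ at once, and this narrow window is available precisely because Lemma~\ref{lemma:no-series-boundary} produces $C$ with $\gamma_n/\alpha_{m,n} \to 0$ strictly for every $m$.
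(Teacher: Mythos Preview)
Your proof is correct and takes a genuinely different route from the paper's. Both arguments begin the same way: apply Lemma~\ref{lemma:no-series-boundary} to the reciprocals $(1/\alpha_{m,n})_n$ to produce an intermediate scaling (your $C$, the paper's $A'$) that sits strictly below every $A_m$, and then remove $\bigcup_m H(K_{B_m,\Phi})$. The divergence is in how the two \emph{proper} inclusions are established. The paper iterates the shrinking: it invokes Dini's Lemma~\ref{lemma:dini} once more to obtain $A$ with $H(K_{A,\Phi}) \subsetneq H(K_{A',\Phi})$, takes $F = H(K_{A,\Phi}) \setminus \bigcup_m H(K_{B_m,\Phi})$, and asserts both strict inclusions from the chain of properly nested RKHSs, citing one further shrinking via Proposition~\ref{prop:scaled-proper-subset} for $S_{\mathfrak{R}}(K) \subsetneq F$. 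You instead build explicit witness functions: a random Bernoulli set $E$ (justified by the three-series theorem) gives an infinite, co-infinite index set on which every divergent series $\sum_n 1/\beta_{m,n}$ remains divergent, and you then assemble coefficient sequences supported on $E$ together with carefully sized bumps $\gamma_{s_k}$ on a sparse subsequence of $\N \setminus E$. What your approach buys is that the properness is fully explicit---in particular, you directly confront the point that an arbitrary element of a larger scaled RKHS minus a smaller one need not automatically avoid $\bigcup_m H(K_{B_m,\Phi})$, something the paper's terse argument leaves unaddressed. What the paper's approach buys is brevity and a purely structural argument that avoids any probabilistic machinery, relying only on repeated applications of Lemma~\ref{lemma:dini} and Proposition~\ref{prop:scaled-proper-subset}.
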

\begin{proof} By Lemma~\ref{lemma:no-series-boundary}, there is a positive sequence $A'=(\alpha_n')_{n=1}^\infty$ such that
  \begin{equation*}
    \sum_{n=1}^\infty \frac{1}{\alpha_n'} < \infty \quad \text{ and } \quad \lim_{n \to \infty} \frac{\alpha_{m,n}}{\alpha_n'} = \infty \quad \text{ for every $m \in \N$}.
  \end{equation*}
  By Proposition~\ref{prop:scaled-proper-subset}, $H(K_{A',\Phi}) \subsetneq H(K_{A_m,\Phi})$ for every $m$.
  Therefore $H(K_{A',\Phi}) \subset \bigcap_{m=1}^\infty H(K_{A_m,\Phi})$.
  Using Lemma~\ref{lemma:dini} and Proposition~\ref{prop:scaled-proper-subset} we can construct a $\Phi$-scaling $A = (\alpha_n)_{n=1}^\infty$ such that $\sum_{n=1}^\infty \alpha_n^{-1} < \infty$ and $H(K_{A,\Phi}) \subsetneq H(K_{A',\Phi})$.
  By Theorem~\ref{thm:driscoll-scaled}, the set
  \begin{equation*}
    F = H(K_{A,\Phi}) \setminus \bigcup_{m=1}^\infty H(K_{B_m,\Phi}) \subsetneq \bigcap_{m=1}^\infty H(K_{A_m,\Phi}) \setminus \bigcup_{m=1}^\infty H(K_{B_m,\Phi})
  \end{equation*}
  is of $\mu_X$-measure one and thus $S_\mathfrak{R}(K) \subset F$ by definition.
  But Proposition~\ref{prop:scaled-proper-subset} can be used again to construct a set $F' \subsetneq F$ such that $S_\mathfrak{R}(K) \subset F'$.
  Hence $S_\mathfrak{R}(K) \subsetneq F$.
  This completes the proof.
\end{proof}

In Theorem~\ref{thm:not-countable} the set $F$ was constructed by finding a scaled RKHS which is smaller than each $H(K_{A_m,\Phi})$ while still containing the sample set.
A natural question is if we can also construct a scaled RKHS that contains each $H(K_{B_m,\Phi})$ but not the sample support set.
Although Lemma~\ref{lemma:no-series-boundary} has a counterpart for divergent series, it seems difficult to guarantee that the resulting sequence is a scaling.
Namely, it can be shown that if $(\beta_{m,n})_{n=1}^\infty$ in Theorem~\ref{thm:not-countable} are such that
$\lim_{n \to \infty} \beta_{m,n} / \beta_{m+1,n} = 0$ for every $m \in \N$,
then there exists $B = (\beta_{n})_{n=1}^\infty$ such that
$\sum_{n=1}^\infty \beta_n^{-1} = \infty$ and $\lim_{n \to \infty} \beta_{m,n} / \beta_n = 0$ for every $m \in \N$.
Although there is a $\Phi$-scaling of $H(K)$ such that $\sum_{n=1}^\infty \alpha_n^{-1} < \infty$, the divergence of $\sum_{n=1}^\infty \beta_n^{-1}$ does not have to imply that $B$ is a $\Phi$-scaling because \emph{a divergent series can have terms arbitrarily larger than those of a convergent series}~\citep[p.\@~303]{Knopp1951}: there is a positive sequence $(\gamma_n)_{n=1}^\infty$ such that
$\sum_{n=1}^\infty \gamma_n^{-1} < \infty$ and $\liminf_{n \to \infty} \gamma_n / \beta_n = 0$.
If $(\gamma_n)_{n=1}^\infty$ is a $\Phi$-scaling of $H(K)$, we cannot directly deduce that so is $B$ because $\beta_n \preceq \gamma_n$ fails.

\subsection{On Monotonely Scaled RKHSs} \label{sec:sample-set-monotone}

A scaling $A = (\alpha_n)_{n=1}^\infty$ is monotone if $(\alpha_n)_{n=1}^\infty$ is a monotone sequence.
In this section we demonstrate that the characterisation~\eqref{eq:app-constant-1-f} fails if only \emph{monotone} scalings are permitted.
Let $\Sigma_\textsc{m}$ be the collection of monotone $\Phi$-scalings of $H(K)$ and $\mathfrak{R}_\textsc{m}$ the collection of corresponding scaled kernels.
Then the sample support set $S_{\mathfrak{R}_\textsc{m}}(K)$ is strictly larger than the sample support set $S_{\mathfrak{R}}(K)$ characterised by Theorem~\ref{thm:sample-set-all} because, as we show below, a $\Sigma_\textsc{m}$-approximately constant sequence can be unbounded, as long as it does not grow too rapidly, and contain arbitrarily long sequences of zeros.

Let $(g_m)_{m=0}^\infty$ be a strictly increasing sequence of positive integers and set \sloppy{${a_{g_m} = g_{m+1}-g_m}$} for $m \geq 0$ and $a_n = 0$ otherwise. Then
\begin{equation} \label{eq:cauchy-decomposition}
  \sum_{n=1}^\infty a_n b_n = \sum_{m=0}^\infty (g_{m+1} - g_m) b_{g_m}
\end{equation}
for any sequence $(b_n)_{n=1}^\infty$.
Schlömilch's generalisation of the Cauchy condensation test~\citep[p.\@~121]{Knopp1951} implies that, under the assumption $g_{m+1}-g_m \leq C(g_m - g_{m-1})$ for some $C > 0$ and all $m \geq 1$, for any non-increasing positive sequence $(b_n)_{n=1}^\infty$ the series $\sum_{n=1}^\infty b_n$ and $\sum_{m=0}^\infty (g_{m+1} - g_m) b_{g_m}$ either both converge or diverge.
That $(a_n)_{n=1}^\infty$ is $\Sigma_\textsc{m}$-approximately constant follows then from~\eqref{eq:cauchy-decomposition}.
The condition \sloppy{${g_{m+1}-g_m \leq C(g_m - g_{m-1})}$}, or equivalently $a_{g_m} \leq C a_{g_{m-1}}$, guarantees that $a_{g_m}$ does not grow too fast in relation to $g_m$.
The canonical example is obtained by setting $g_m = 2^m$ so that $a_{2^m} = 2^m$ and $C=2$ and $\sum_{m=0}^\infty (g_{m+1} - g_m) b_{g_m} = \sum_{m=0}^\infty 2^m b_{2^m}$ converges or diverges with $\sum_{n=1}^\infty b_n$ by the standard Cauchy condensation test.
Note that, because
\begin{equation*}
  \frac{1}{g_m} \sum_{n=1}^{g_m} a_{n} = \frac{1}{g_m}\sum_{k=0}^{m} (g_{k+1}-g_k) = \frac{g_{m+1}-g_0}{g_m} \leq \frac{g_{m+1}}{g_m} \leq \frac{(1+C)g_m}{g_m} = 1+C
\end{equation*}
and
\begin{equation*}
  \frac{1}{g_m-1} \sum_{n=1}^{g_m-1} a_{n} = \frac{1}{g_m - 1}\sum_{k=0}^{m-1} (g_{k+1}-g_k) = \frac{g_m - g_0}{g_m - 1},
\end{equation*}
the elements of the sequence $(a_n)_{n=1}^\infty$ are, on average, constants:
\begin{equation*}
  \liminf_{N \to \infty} \, \frac{1}{N} \sum_{n=1}^N a_n = 1 \quad \text{ and } \quad \sup_{N \geq 1} \, \frac{1}{N} \sum_{n=1}^N a_n \leq 1 + C.
\end{equation*}
This can be interpreted as a weaker form of~\eqref{eq:app-constant-1-f}.

Although $\Sigma_\textsc{m}$ contains unbounded sequences and sequences with zero lower limit, no sequence in this set can have infinity or zero as its limit.
This can be shown by using results in~\citep[§~41]{Knopp1951} which, given a sequence such that $\lim_{n \to \infty} a_n = \infty$, guarantee the existence of a monotone sequence $(c_n)_{n=1}^\infty$ such that $\sum_{n=1}^\infty c_n < \infty$ but $\sum_{n=1}^\infty a_n c_n = \infty$.
Conversely, given a sequence such that $\lim_{n \to \infty} a_n = 0$ it is possible to construct a monotone sequence $(d_n)_{n=1}^\infty$ such that $\sum_{n=1}^\infty d_n = \infty$ but $\sum_{n=1}^\infty a_n d_n < \infty$.

\subsection{On Hyperharmonic Scalings and Iterated Logarithms} \label{sec:iterated-logarithms}

We conclude this section with two general constructions before moving onto concrete examples.
Recall from Section~\ref{sec:scaled-rkhs} that scalings of the form $A_\rho = (n^\rho)_{n=1}^\infty$ for any $\rho \geq 0$ are called hyperharmonic scalings.
Because $\sum_{n=1}^\infty n^{-\rho} < \infty$ if and only if $\rho > 1$, Theorem~\ref{thm:driscoll-scaled} implies that
$\mathbb{P}\big[ X \in H(K_{A_\rho,\Phi}) \big] = 1$ if $\rho > 1$ and $\mathbb{P}\big[ X \in H(K_{A_\rho,\Phi}) \big] = 0$ if $\rho \leq 1$.
Therefore the samples are ``almost'' contained in $H(K_{A_1,\Phi})$, while $H(K_{A_{1+\varepsilon},\Phi})$ is a ``small'' RKHS which contains the samples for any ``small'' $\varepsilon > 0$.
Thus the sample support set in Theorem~\ref{thm:not-countable} satisfies
\begin{equation*}
  S_\mathfrak{R}(K) \subsetneq \bigcap_{k=1}^\infty H(K_{A_{1 + 1/k},\Phi}) \setminus H(K_{A_1,\Phi}),
\end{equation*}
where the set on the right-hand side is in $\mathcal{B}$ and has $\mu_X$-measure one.
But Theorem~\ref{thm:not-countable} also guarantees the existence of a set $F \in \mathcal{B}$ which is a proper subset of $\bigcap_{k=1}^\infty H(K_{A_{1 + 1/k},\Phi}) \setminus H(K_{A_1,\Phi})$ while having $\mu_X$-measure one.
One such set is $F = H(K_{A,\Phi}) \setminus H(K_{A',\Phi})$ for the scalings $A = (\alpha_n)_{n=1}^\infty$ and $A' = (\alpha_n')_{n=1}^\infty$ defined by $\alpha_n = n \log (n+1)^2$ and $\alpha_n' = n \log(n+1)$.
This is because $\sum_{n=1}^\infty \alpha_n^{-1}$ converges but $\sum_{n=1}^\infty (\alpha_n')^{-1}$ does not and
\begin{equation*}
  \lim_{n \to \infty} \frac{n^\rho}{n \log (n+1)^2} = \infty \: \text{ for any } \: \rho > 1 \quad \text{ and } \quad \lim_{n \to \infty} \frac{n \log (n+1)}{n} = \infty,
\end{equation*}
which by Proposition~\ref{prop:scaled-proper-subset} imply that $H(K_{A,\Phi}) \subset \bigcap_{k=1}^\infty H(K_{A_{1 + 1/k},\Phi})$ and $H(K_{A_1,\Phi}) \subsetneq H(K_{A,\Phi})$.

But one can construct even smaller measurable sets which contain the samples by the use of iterated logarithms.
The iterated logarithm $\log_p x$ is defined recursively as $\log_p x = \log ( \log_{p-1} x)$ for $p \in \N$ and $\log_0 x = x$.
  Let $p \in \N_0$ and suppose that $q \geq 0$ is large enough that $\log_p (1 + q)$ is positive.
  For any $\rho > 1$, define $A_{\textup{log}(\rho)} = (\alpha_n)_{n=1}^\infty$ by
  $\alpha_n = (n + q) \log (n + q) \times \cdots \times \log_{p-1} (n + q) \log_p (n + q)^\rho$
  and $A_\textup{log} = (\alpha_n')_{n=1}^\infty$ by
  $\alpha_n' = (n + q) \log (n + q) \times \cdots \times \log_{p-1} (n + q) \log_p (n+q)$.
  It can be proved that $\sum_{n=1}^\infty \alpha_n^{-1} < \infty$ but $\sum_{n=1}^\infty (\alpha_n')^{-1} = \infty$~\citep[pp.\@~123, 280, 293]{Knopp1951}.
  Moreover,
  \begin{equation*}
    \lim_{n \to \infty} \frac{n \log (n+1)^2}{ (n + q) \log (n + q) \times \cdots \times \log_{p-1} (n + q) \log_p (n + q)^\rho } = \infty
  \end{equation*}
  and
  \begin{equation*}
    \lim_{n \to \infty} \frac{(n + q) \log (n + q) \times \cdots \times \log_{p-1} (n + q) \log_p (n+q)}{ n \log (n + 1) } = \infty
  \end{equation*}
  if $p \geq 2$.
  Therefore iterated logarithms can be used to construct sets of $\mu_X$-measure one which are smaller than the set $F$ above.
  But Theorem~\ref{thm:not-countable} again demonstrates that for any set of $\mu_X$-measure one constructed out of scalings $A_{\textup{log}(\rho)}$ and $A_\textup{log}$ there is a smaller set which still has measure one (and which contains the sample support set).

\section{Examples} \label{sec:examples}

This section contains examples of kernels to which Theorem~\ref{thm:driscoll-scaled} can be applied to construct ``small'' RKHSs that contain the samples and ``slightly smaller'' RKHSs that do not.
For simplicity we let the domain $T$ be a finite interval on the real line and occasionally index the orthonormal bases starting from zero.
Note that to use our results one needs to have access to an orthonormal expansion of the kernel.
This rules out examples involving the popular Matérn kernels because, to the best of our knowledge, no orthonormal expansions have been computed for these kernels.
Previous results based on powers of RKHSs and convolution kernels are easier to apply in this regard, but are less flexible and expressive.
See~\citep[Section~4.4]{Kanagawa2018} and~\citep{Steinwart2019} and for examples featuring powers of RKHSs (in particular for Sobolev kernels and the Gaussian kernel) and \citep[Appendices~A.2 and~A.3]{Flaxman2016} for convolution kernel examples. \citet{Lukic2004} has examples involving integrated kernels.

\subsection{Iterated Brownian Bridge Kernels} \label{sec:iterated-BM}

Let $T = [0,1]$ and consider the \emph{iterated Brownian bridge kernel} of integer order $s \geq 2$~\citep[Section~4.1]{Cavoretto2015}:
\begin{equation} \label{eq:iterated-BB}
  K^s(t, t') = \frac{2}{\pi^{s}} \sum_{n=1}^\infty \frac{\sin(\pi n t) \sin(\pi n t')}{n^{s}} = \sum_{n=1}^\infty \phi_n(t) \phi_n(t'),
\end{equation}
where $\phi_n(t) = \sqrt{2} (\pi n)^{-s/2} \sin(\pi n t)$.
One can show that for even parameters the kernel is
\begin{equation*}
  K^{2s}(t, t') = (-1)^{s-1} \frac{2^{2s-1}}{(2s)!} \Bigg[ \mathrm{B}_{2s}\bigg(\frac{\abs[0]{t-t'}}{2}\bigg) -  \mathrm{B}_{2s}\bigg(\frac{t+t'}{2}\bigg) \Bigg],
\end{equation*}
where $\mathrm{B}_p$ is the Bernoulli polynomial of degree $p$. For $s = 2$ we obtain the Brownian bridge kernel
  $K^2(t, t') = \min\{t, t'\} - tt'$.
For $s \geq 1$ and fixed $t' \in [0,1]$ the even-order translates $K^{2s}(\cdot, t')$ are piecewise polynomials of order $2s-1$.

Iterated Brownian bridge kernels are natural candidates for hyperharmonic scalings because the $\rho$-hyperharmonic scaling $A_{\rho} = (n^{\rho})_{n=1}^\infty$ for $\rho \in \N$ such that $\rho \leq s-2$ gives
\begin{equation} \label{eq:iterated-BB-harmonic-scaling}
  K_{A_{\rho},\Phi}^s(t, t') = \frac{2}{\pi^{s}} \sum_{n=1}^\infty \frac{\sin(\pi n t) \sin(\pi n t')}{n^{s-\rho}} = \frac{1}{\pi^{\rho}}K^{s-\rho}(t,t').
\end{equation}
By Theorem~\ref{thm:driscoll-scaled}, samples of $(X(t))_{t \in T} \sim \mathcal{GP}(0, K^s)$ are in the RKHS of this kernel for any such $\rho \geq 2$ because $\sum_{n=1}^\infty n^{-\rho} < \infty$ whenever $\rho > 1$.
From the identity
  $2\int_0^1 \sin(\pi n t) \sin(\pi m t) \dif t = \delta_{nm}$
we see that the Mercer expansion with respect to the Lebesgue measure on $[0,1]$ of $K^s$ is
\begin{equation*}
  K^s(t,t') = \sum_{n=1}^\infty \lambda_n \psi_n(t) \psi_n(t') \: \text{ with } \: \lambda_n = \frac{1}{(\pi n)^{s}} \: \text{ and } \: \psi_n(t) = \sqrt{2}\sin(\pi n t).
\end{equation*}
Therefore the $\theta$th power of $H(K^s)$, with $\theta = 1-\rho/s$, equals $H(K_{A_{\rho},\Phi}^s)$ as a set.
In this case the power RKHS is recovered as an instance of a scaled RKHS.
We can also consider logarithmic scalings (recall Section~\ref{sec:iterated-logarithms}), which do not correspond to power RKHSs, such as $A = (\alpha_n)_{n=1}^\infty$ for $\alpha_n = n \log(n+1)^2$ and $A' = (\alpha_n')_{n=1}^\infty$ for $\alpha_n' = n \log (n+1)$. 
These yield the scaled kernels
\begin{equation} \label{iterated-BB-scaled-1}
  K_{A,\Phi}^s(t,t') = \frac{2}{\pi^{s}} \sum_{n=1}^\infty \frac{\sin(\pi n t) \sin(\pi n t')}{n^{s-1}} \log(n+1)^2
\end{equation}
and
\begin{equation} \label{iterated-BB-scaled-2}
  K_{A',\Phi}^s(t,t') = \frac{2}{\pi^{s}} \sum_{n=1}^\infty \frac{\sin(\pi n t) \sin(\pi n t')}{n^{s-1}} \log(n+1), 
\end{equation}
which do not appear to have closed form expressions. Because 
$\sum_{n=1}^\infty 1/(n \log(n+1)^2) < \infty$ but $\sum_{n=1}^\infty 1/(n \log(n+1)) = \infty$
the samples from $(X(t))_{t \in T} \sim \mathcal{GP}(0, K^s)$ are located in $H(K_{A,\Phi}^s)$ but not in $H(K_{A',\Phi}^s)$.

\begin{figure}[t]
  \centering
  \includegraphics[width=0.8\textwidth]{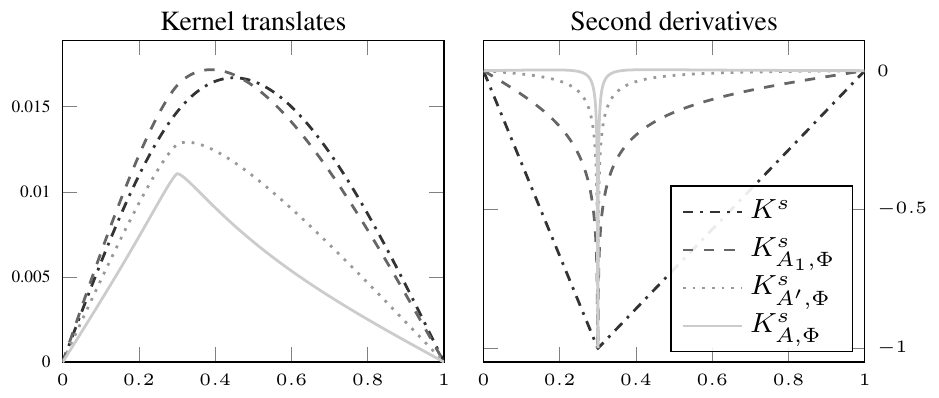}
  \caption{Translates (left) and their second derivatives (right) at $t' = 0.3$ of the kernel $K^s$ and its three translates $K_{A_1,\Phi}^s$, $K_{A',\Phi}^s$ and $K_{A,\Phi}^s$ for $s=4$. The second derivatives have been scaled so as to have $-1$ as their minimum. The scaled kernels were computed by truncating the series after 5,000 terms.} \label{fig:iterated-bb}
\end{figure}

Translates at $t' = 0.3$ of the kernel $K^s$ in~\eqref{eq:iterated-BB} and the scaled kernels $K_{A_1,\Phi}^s$ in~\eqref{eq:iterated-BB-harmonic-scaling}, $K_{A,\Phi}^s$ in~\eqref{iterated-BB-scaled-1} and $K_{A',\Phi}^s$ in~\eqref{iterated-BB-scaled-2} are plotted in Figure~\ref{fig:iterated-bb} for $s = 4$.
Also plotted are the second derivatives of the translates.
As noted, the samples from $(X(t))_{t \in T} \sim \mathcal{GP}(0, K^s)$ are in $H(K_{A,\Phi}^s)$ but not in $H(K_{A',\Phi}^s)$ or $H(K_{A_1,\Phi}^s)$.
Moreover,
\begin{equation} \label{eq:iterated-BB-inclusions}
 H(K^s) \subsetneq H(K_{A_1,\Phi}^s) \subsetneq H(K_{A',\Phi}^s) \subsetneq H(K_{A,\Phi}^s)
\end{equation}
by Proposition~\ref{prop:scaled-proper-subset}.
The second derivative of $K^s(\cdot, t')$ is Lipschitz while those of the scaled kernels are less well-behaved, though nevertheless continuous.
How the second derivatives behave is indicative of the inclusions in~\eqref{eq:iterated-BB-inclusions}: the larger the RKHS, the more severe the non-differentiability at $t=0.3$.

\subsection{Gaussian Kernel} \label{sec:gaussian-kernel}

The ubiquitous Gaussian kernel with a length-scale parameter $\ell > 0$ is
\begin{equation} \label{eq:gauss-kernel}
  K(t, t') = \exp\bigg(\!-\frac{(t-t')^2}{2\ell^2} \bigg) = \exp\bigg(\!-\frac{t^2}{2\ell^2} \bigg) \exp\bigg(\!-\frac{(t')^2}{2\ell^2} \bigg) \sum_{n=0}^\infty \frac{1}{\ell^{2n} n!} (tt')^n
\end{equation}
and the functions
\begin{equation} \label{eq:gauss-basis}
  \phi_n(t) = \frac{1}{\ell^{n} \sqrt{n!}} t^{n} \exp\bigg(\!-\frac{t^2}{2\ell^2} \bigg) \quad \text{ for } \quad n \geq 0
\end{equation}
form an orthonormal basis of its RKHS~\citep{Minh2010, Steinwart2006}.

The easiest way to proceed is to consider hyperharmonic scalings \sloppy{${A_\rho = (n^\rho)_{n=0}^\infty}$} for $\rho > 0$ with the convention $0^\rho = 1$.
Then
\begin{equation} \label{eq:gauss-kernel-hyperharmonic}
  K_{A_\rho,\Phi}(t,t') = \exp\bigg(\!-\frac{t^2+(t')^2}{2\ell^2} \bigg) \bigg( 1 + \sum_{n=1}^\infty \frac{n^\rho}{\ell^{2n} n!} (tt')^{n} \bigg).
\end{equation}
For $\rho = 1$ we get a simple analytic expression for the scaled kernel:
\begin{equation} \label{eq:gauss-kernel-harmonic}
  \begin{split}
    K_{A_1,\Phi}(t,t') &= \exp\bigg(\!-\frac{t^2+(t')^2}{2\ell^2} \bigg) \bigg( 1 + \frac{tt'}{\ell^2} \sum_{n=0}^\infty \frac{1}{n!} \bigg(\frac{tt'}{\ell^2}\bigg)^{n} \bigg) \\
    &= \exp\bigg(\!-\frac{t^2+(t')^2}{2\ell^2} \bigg) \bigg( 1 + \frac{tt'}{\ell^2} \exp\bigg( \frac{tt'}{\ell^2} \bigg) \bigg).
    \end{split}
\end{equation}
Since the harmonic series $\sum_{n=1}^\infty n^{-1}$ is the prototypical example of an ``almost'' convergent series, we can informally say that the RKHS of~\eqref{eq:gauss-kernel-harmonic} is only slightly too small to contain samples of the Gaussian process $(X(t))_{t \in T} \sim \mathcal{GP}(0, K)$ with covariance kernel~\eqref{eq:gauss-kernel}.
For a few larger values of $\rho$ we have
\begin{align*}
  K_{A_2,\Phi}(t,t') &= \exp\bigg(\!-\frac{t^2+(t')^2}{2\ell^2} \bigg) \big[ 1 + (a+a^2) \exp(a) \big], \\
  K_{A_3,\Phi}(t,t') &= \exp\bigg(\!-\frac{t^2+(t')^2}{2\ell^2} \bigg) \big[ 1 + (a+3a^2+a^3) \exp(a) \big], \\  
  K_{A_4,\Phi}(t,t') &= \exp\bigg(\!-\frac{t^2+(t')^2}{2\ell^2} \bigg) \big[ 1 + (a+7a^2+6a^3+a^4) \exp(a) \big],
\end{align*}
where $a = tt'/\ell^2$.
By Theorem~\ref{thm:driscoll-scaled}, the RKHSs of the above kernels contain the samples because \sloppy{${\sum_{n=1}^\infty n^{-\rho} < \infty}$} if and only if $\rho > 1$.

Another example can be constructed using the scaling $A = (\tau^{2n})_{n=0}^\infty$ for some $\tau > 0$. Then
\begin{equation} \label{eq:gauss-kernel-tau}
  K_{A,\Phi}(t,t') = \exp\bigg(\!-\frac{t^2+(t')^2}{2\ell^2} \bigg) \sum_{n=0}^\infty \frac{1}{n!} \bigg( \frac{\tau^2 tt'}{\ell^2}\bigg)^{n} = \exp\bigg(\!-\frac{t^2+(t')^2}{2\ell^2} + \frac{\tau^2 tt'}{\ell^2} \bigg).
\end{equation}
The RKHS of this kernel contains the samples if and only if $\sum_{n=0}^\infty \tau^{-2n}$ converges, which is equivalent to $\tau > 1$.
Since $K_{A,\Phi}$ equals the original kernel if $\tau=1$, this class of scalings is not particularly expressive.
Because the Mercer eigenvalues of the Gaussian kernel have an exponential decay this example is reminiscent (but more explicit) of the power RKHS consruction for the Gaussian kernel in Section~4.4 of \citet{Kanagawa2018}.
Observe also that with the selection $\ell^2 = (1-r^2)/r^2$ and $\tau^2 = 1/r$ for $r \in (0,1)$ the kernel equals the scaled Mehler kernel
\begin{equation*}
  K_M^r(t, t') = \exp\bigg( \! - \frac{r^2(t^2 + (t')^2) - 2r t t'}{2(1-r^2)} \bigg) = \sqrt{1-r^2} \sum_{n=0}^\infty \frac{r^n}{n!} \mathrm{H}_n(t) \mathrm{H}_n(t'),
\end{equation*}
where $\mathrm{H}_n$ is the $n$th probabilists' Hermite polynomial.
The RKHS the Mehler kernel is analysed in~\citep{IrrgeherLeobacher2015}.

A few of the kernels mentioned above are shown in Figure~\ref{fig:gaussian}. By Proposition~\ref{prop:scaled-proper-subset}, their RKHSs satisfy
  \begin{equation*}
    H(K) \subsetneq \underset{\rho\,=\,1}{H(K_{A_\rho, \Phi})} \subsetneq \underset{\rho\,=\,1.1}{H(K_{A_\rho, \Phi})} \subsetneq \underset{\rho\,=\,2}{H(K_{A_\rho, \Phi})} \subsetneq \underset{A\,=\,(\tau^{2n})_{n=0}^\infty}{H(K_{A, \Phi})}
  \end{equation*}
  and the three largest of these contain the samples of $(X(t))_{t \in T} \sim \mathcal{GP}(0, K)$.
  All the kernels are qualitatively quite similar, being all infinitely differentiable.

\begin{figure}[t]

  \centering
  \includegraphics[width=0.8\textwidth]{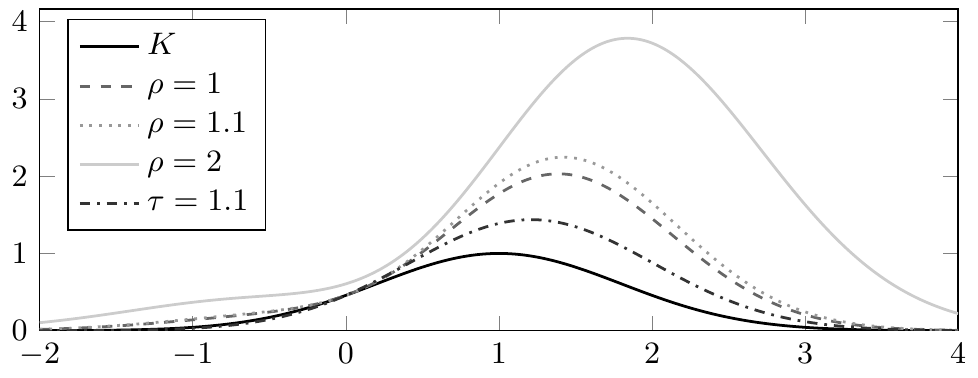}

  \caption{The translates at $t' = 1$ of the Gaussian kernel $K$ in~\eqref{eq:gauss-kernel}, the kernels $K_{A_\rho,\Phi}$ in~\eqref{eq:gauss-kernel-hyperharmonic} for $\rho \in \{1, 1.1, 2\}$, and the kernel in~\eqref{eq:gauss-kernel-tau} with $\tau=1.1$. The length-scale is $\ell = 0.8$.} \label{fig:gaussian}
  
\end{figure}

\subsection{Power Series Kernels} \label{sec:power-series-kernels}

A \emph{power series kernel}~\citep{Zwicknagl2009,ZwicknaglSchaback2013} is a kernel of the form
\begin{equation} \label{eq:power-series-kernel}
  K(t, t') = \sum_{n=0}^\infty \frac{w_n}{(n!)^2} (tt')^n
\end{equation}
for positive coefficients $w_n$ such that the series converges for all $t,t' \in T$.
These kernels do not appear have seen much use in the probabilistic setting, but are useful in functional analysis and approximation theory because, for particular choices of the coefficients, they reproduce important classical function spaces.
For instance, the selection $w_n = (n!)^2$ yields the \emph{Szeg\H{o} kernel}
\begin{equation} \label{eq:szego-kernel}
  K(t,t') = \sum_{n=0}^\infty (tt')^n = \frac{1}{1-tt'}
\end{equation}
which converges as long as $T \subset (-1,1)$ and whose complex extension is the reproducing kernel of the Hardy space~$\mathbb{H}^2_1$ on the unit disc.
The \emph{exponential kernel}
\begin{equation} \label{eq:exponential-kernel}
  K(t,t') = \sum_{n=0}^\infty \frac{1}{n!} (tt')^n = \exp(tt'),
\end{equation}
obtained by setting $w_n = n!$, is power series kernel that is convergent on the whole real line.

From the expansion in~\eqref{eq:power-series-kernel} we see that the functions
$\phi_n(t) = t^n \sqrt{w_n}/ n!$ for $n \geq 0$
form an orthonormal basis of $H(K)$ and therefore a scaled kernel is
\begin{equation*}
  K_{A,\Phi}(t,t') = \sum_{n=0}^\infty \frac{\alpha_n w_n}{(n!)^2} (tt')^n,
\end{equation*}
which itself is a power series kernel.
By Theorem~\ref{thm:driscoll-scaled} the samples from $(X(t))_{t \in T} \sim \GP(0, K)$ are in the RKHS of a different power series kernel
\begin{equation*}
  \bar{K}(t,t') = \sum_{n=0}^\infty \frac{\bar{w}_n}{(n!)^2} (tt')^n \quad \text{ if and only if } \quad \sum_{n=0}^\infty \frac{w_n}{\bar{w}_n} < \infty.
\end{equation*}
For example, samples from a Gaussian process with the exponential covariance kernel~\eqref{eq:exponential-kernel} are in the RKHS of the Szeg\H{o} kernel~\eqref{eq:szego-kernel} if $T \subset (-1,1)$ because
$\sum_{n=0}^\infty w_n / \bar{w}_n = \sum_{n=0}^\infty n! / (n!)^2 < \infty$.
Its samples are \emph{not} in the RKHS of the kernel
\begin{equation*}
  \bar{K}(t,t') = 1 + \sum_{n=1}^\infty \frac{n! n}{(n!)^2} (tt')^n = 1 + tt' \exp(tt'),
\end{equation*}
where $w_0 = 1$ and $w_n = n!n$ for $n \geq 1$, because
$\sum_{n=0}^\infty w_n / \bar{w}_n = 1 + \sum_{n=1}^\infty n^{-1} = \infty$.
Note that this is essentially the same example as the one for the Gaussian kernel that involved the kernel~\eqref{eq:gauss-kernel-harmonic}.

\section{Application to Maximum Likelihood Estimation} \label{sec:MLE}

Gaussian processes are often used to model deterministic data-generating functions in, for example, design of computer experiments~\citep{Sacks1989} and probabilistic numerical computation~\citep{Cockayne2019, Diaconis1988}.
In applications it is typical that the covariance kernel has parameters, such as the length-scale parameter $\ell > 0$ in~\eqref{eq:gauss-kernel}, which are estimated from the data.
Maximum likelihood estimation is one of the most popular approaches to estimate the kernel parameters; see, for example,~\citep[Section~5.4.1]{RasmussenWilliams2006} or~\citep[Chapter~3]{Santner2003}.
This section uses Theorem~\ref{thm:driscoll-scaled} to explain the behaviour of the maximum likelihood estimate of the kernel scaling parameter that Xu and Stein~\citep{XuStein2017} observed recently.

Let $K \colon T \times T \to \R$ be a positive-definite kernel and $K_\sigma = \sigma^2 K$ a kernel parametrised by a non-negative scaling parameter $\sigma$.
Suppose that the data consist of evaluations of a function $f \colon T \to \R$ at distinct points $t_1, \ldots, t_N \in T$ and that $f$ is modelled as a Gaussian process $(X_\sigma(t))_{t \in T} \sim \GP(0, K_\sigma)$.
It is easy to compute that the maximum likelihood estimate $\hat{\sigma}_{f,N}$ of $\sigma$ is (see the references above)
\begin{equation*}
  \hat{\sigma}_{f,N} = \sqrt{ \frac{\mathsf{f}_N^\mathsf{T}\mathsf{K}_N^{-1} \mathsf{f}_N}{N} },
\end{equation*}
where $\mathsf{f}_N = (f(t_1), \ldots, f(t_N)) \in \R^N$ is a column vector and $\mathsf{K}_N \in \R^{N \times N}$ is the positive-definite covariance matrix with elements $(\mathsf{K}_N)_{ij} = K(t_i, t_j)$.
Suppose for a moment that $f$ is a zero-mean Gaussian process with covariance $\sigma_0^2 K$.
Then
\begin{equation*}
  \mathbb{E}_f \big[ \hat{\sigma}_{f,N}^2 \big] = \frac{\mathbb{E}_f \big[ \mathsf{f}_N^\mathsf{T}\mathsf{K}_N^{-1} \mathsf{f}_N \big] }{N} = \frac{\mathbb{E}_f \big[ \mathrm{tr} ( \mathsf{f}_N \mathsf{f}_N^\mathsf{T}\mathsf{K}_N^{-1} ) \big] }{N} = \frac{\mathrm{tr}( \sigma_0^2 \mathsf{K}_N \mathsf{K}_N^{-1} ) }{N} = \sigma_0^2.
\end{equation*}
This suggests that $\hat{\sigma}_{f,N}$ ought to tend to a constant as $N \to \infty$ if $f$ is a \emph{deterministic} function which is ``akin'' to the samples of $(X(t))_{t \in T} \sim \GP(0, K)$.
Theorem~\ref{thm:sample-set-all} can be interpreted as saying that functions of the form $f = \sum_{n=1}^\infty f_n \phi_n$ for coefficients $f_n$ which are bounded away from zero and infinity are akin to the samples.
For example, the function $f(t) = \exp(- t^2/(2\ell^2)) \sum_{n=0}^\infty t^n / ( \ell^n \sqrt{n!})$
can be thought of as a sample from a Gaussian process with the Gaussian covariance kernel in~\eqref{eq:gauss-kernel}.

When $H(K)$ is norm-equivalent to the Sobolev space $W_2^{s + d/2}(T)$ on a suitable bounded domain $T \subset \R^d$, \citet{Karvonen-MLE2020} have used results from scattered data approximation literature to essentially argue that the maximum likelihood estimate decays to zero if $f$ is too regular to be a sample from $X$ and explodes if $f$ is too irregular in the sense that, assuming the points $t_i$ cover $T$ sufficiently uniformly,
\begin{equation*}
  \lim_{N \to \infty} \hat{\sigma}_{f, N} = 0 \: \text{ if } \: f \in W_2^s(T) \quad \text{ and } \quad \lim_{N \to \infty} \hat{\sigma}_{f, N} = \infty \: \text{ if } \: f \in W_2^r(T) \setminus W_2^{s - \varepsilon}(T)
\end{equation*}
for any $\varepsilon > 0$ and $r \in (d/2, s - \varepsilon)$.
Recall from Section~\ref{sec:related} that the samples are contained in $W_2^r(T)$ if and only if $r < s$.
This suggests the conjecture that, for any kernels $K_0$, $K_1$, and $K_2$ such that $\mathbb{P}[X \in H(K_0)] = 0$, $\mathbb{P}[X \in H(K_1)] = \mathbb{P}[X \in H(K_2)] = 1$, and $H(K_2) \subset H(K_1)$, it should hold that
\begin{equation} \label{eq:mle-conjecture}
  \lim_{N \to \infty} \hat{\sigma}_{f, N} = 0 \: \text{ if } \: f \in H(K_0) \quad \text{ and } \quad \lim_{N \to \infty} \hat{\sigma}_{f, N} = \infty \: \text{ if } \: f \in H(K_1) \setminus H(K_2)
\end{equation}
under some assumptions on $T$ and the points $t_i$.
We now show that the behaviour of $\hat{\sigma}_{f,N}$ observed and conjectured by \citet{XuStein2017} when $K$ is the Gaussian kernel in~\eqref{eq:gauss-kernel} and $f$ is a monomial agrees with this conjecture if the kernels $K_0$, $K_1$, and $K_2$ are obtained via hyperharmonic scalings.

Let $K$ be the Gaussian kernel in~\eqref{eq:gauss-kernel}.
Let $T = [0,1]$ and consider the uniform points $t_i = i/N$ for $i=1,\ldots,N$.
\citet{XuStein2017} used explicit Cholesky decomposition formulae derived in~\citep{LohKam2000} to prove that, for certain positive constants $C_{\ell,0}$ and $C_{\ell,1}$,
\begin{equation*}
   \hat{\sigma}_{f,N}^2 \sim C_{\ell,0} N^{-1/2} \:\: \text{ if } \:\: f \equiv 1 \quad \text{ and } \quad \liminf_{N \to \infty} N^{-1/2} \hat{\sigma}_{f,N}^2 \geq C_{\ell,1} \:\: \text{ if } \:\: f(t) = t
\end{equation*}
as $N \to \infty$.
Furthermore, they conjectured that
\begin{equation} \label{eq:XuStein-conjecture}
  \hat{\sigma}_{f,N}^2 \sim C_{\ell,p} N^{p-1/2} \quad \text{ as } \quad N \to \infty
\end{equation}
if $f(t) = t^p$ for any $p \geq 0$ and a positive constant $C_{\ell,p}$.\footnote{As pointed out by \citet[Section~4.1]{DetteZhigljavsky2021}, it appears that the constant $C_{\ell,p} = \ell^{2p}/(\sqrt{2\pi}(p+1/2))$ in~\citep{XuStein2017} is erroneous. By truncating the expansion~\eqref{eq:monomial-as-gauss} after $N$ terms one arrives, after some relatively straightforward computations, to the conjecture that the constant is $C_{\ell,p} = 2^p \ell^{2p}/(\sqrt{\pi}(p+1/2))$.}
In particular, the conjecture~\eqref{eq:XuStein-conjecture} implies that $\hat{\sigma}_{f,N} \to 0$ if $f$ is a constant function and $\hat{\sigma}_{f,N} \to \infty$ if $f(t) = t^p$ for $p \in \N$.
\citet[p.\@~142]{XuStein2017} lacked an intuitive explanation for these findings.
We employ a technique similar to that used in~\citep{Minh2010} to demonstrate that this phenomenon can be explained by the conjecture in~\eqref{eq:mle-conjecture}.

Any monomial of degree $p \in \N_0$ can be written in terms of the orthonormal basis~\eqref{eq:gauss-basis} as
\begin{equation} \label{eq:monomial-as-gauss}
  t^p = \exp\bigg(\frac{t^2}{2\ell^2} \bigg) t^p \exp\bigg(\!-\frac{t^2}{2\ell^2} \bigg) = \sum_{n=0}^\infty \frac{t^{2n+p}}{2^n \ell^{2n} n!} \exp\bigg(\!-\frac{t^2}{2\ell^2} \bigg) = \ell^p \sum_{n=0}^\infty \frac{\sqrt{(2n+p)!}}{2^n n!} \phi_{2n+p}(t).
\end{equation}
Let $A = (\alpha_n)_{n=1}^\infty$ be a $\Phi$-scaling of $H(K)$. If the $p$th monomial is to be an element of the scaled RKHS $H(K_{A,\Phi})$ it must, by Proposition~\ref{thm:scaled-RKHS}, hold that
\begin{equation} \label{eq:MLE-scaled-condition}
  \sum_{n=0}^\infty \alpha_{2n+p}^{-1} \frac{(2n+p)!}{2^{2n} (n!)^2} < \infty.
\end{equation}
From Stirling's formula and $\lim_{n \to \infty} (1+\frac{p}{2n})^{2n} = \e^p$ we get
\begin{align*} 
    \frac{(2n+p)!}{2^{2n} (n!)^2} \sim \frac{\sqrt{2\pi} (2n+p)^{2n+p+1/2} \e^{-(2n+p)}}{2\pi 2^{2n} n^{2n+1} \e^{-2n}} &= \frac{1}{\sqrt{2\pi} \e^p} \frac{(2n+p)^{2n+p+1/2}}{(2n)^{2n}n} \\
    &= \frac{1}{\sqrt{2\pi} \e^p} \bigg( 1 + \frac{p}{2n} \bigg)^{2n} \frac{(2n+p)^{p+1/2}}{n} \\
    &\sim \frac{2^p}{\sqrt{\pi}} \, n^{p-1/2}
\end{align*}
as $n \to \infty$.
Therefore~\eqref{eq:MLE-scaled-condition} holds if and only if $\sum_{n=1}^\infty \alpha_{2n+p}^{-1} n^{p-1/2} < \infty$.
If we constraint ourselves to hyperharmonic scalings this is clearly equivalent to $\alpha_n = n^\rho$ for $\rho > p+1/2$.
But since $\sum_{n=1}^\infty n^{-\rho} < \infty$ if and only if $\rho > 1$, it follows that (a) if $p < 1/2$, then $f(t) = t^p$ is an element of a hyperharmonic scaled RKHS $H(K_{A,\Phi})$ such that $\mathbb{P}[ X \in H(K_{A,\Phi})] = 0$ and (b) if $p \geq 1/2$, then $f(t) = t^p$ is an element of a hyperharmonic scaled RKHS $H(K_{A,\Phi})$ if and only if $\mathbb{P}[ X \in H(K_{A,\Phi})] = 1$.
These observations and the conjecture in~\eqref{eq:XuStein-conjecture} are compatible with our conjecture in~\eqref{eq:mle-conjecture}:
\begin{itemize}
\item If $f$ is a constant function, then $\hat{\sigma}_{f,N}$ tends to zero as $N \to \infty$ and, for $A = (n^\rho)_{n=0}^\infty$ and any $\rho \in (1/2, 1]$, $f$ is an element of the RKHS $H(K_0) = H(K_{A,\Phi})$ which is of zero measure.
\item If $f(t) = t^p$ for $p \in \N$, then $\hat{\sigma}_{f,N}$ tends to infinity (or is conjectured to) as $N \to \infty$.
  Let $H(K_1) = H(K_{A,\Phi})$ and $H(K_2) = H(K_{A',\Phi})$ for hyperharmonic scalings $A = (n^\rho)_{n=0}^\infty$ and $A' = (n^{\sigma})_{n=0}^\infty$, where \sloppy{${\rho > p + 1/2}$} and \sloppy{${\sigma \in (1, p + 1/2]}$}.
  Then both $H(K_1)$ and $H(K_2)$ are of measure one and $f \in H(K_1) \setminus H(K_2)$.
\end{itemize}
In other words, the constant function is too close to $H(K)$, or too ``regular'', to be a sample while higher-order monomials are too far from $H(K)$, or too ``irregular''.

\section*{Acknowledgements}

This work was supported by the Lloyd's Register Foundation Programme for Data-Centric Engineering at the Alan Turing Institute, United Kingdom and the Academy of Finland postdoctoral researcher grant \#338567 ``Scalable, adaptive and reliable probabilistic integration''.
I am grateful to Motonobu Kanagawa for his patience in enduring persistent questions over the course of a conference in Sydney in July~2019 during which the main ideas underlying this article were first conceived and to Chris Oates for extensive commentary on the draft at various stages of preparation.
Discussions with Anatoly Zhigljavsky led to improvements in Section~\ref{sec:MLE}.

\providecommand{\BIBYu}{Yu}

\end{document}